\DeclareSymbolFont{calletters}{OMS}{cmsy}{m}{n}
\DeclareSymbolFontAlphabet{\mathcal}{calletters}
\def\be{\begin{eqnarray}}
\def\ee{\end{eqnarray}}
\makeatletter \@addtoreset{equation}{section}
\def \D{\mathbb{D}}
\def \E{\mathbb{E}}
\def \F{\mathbb{F}}
\def \L{\mathbb{L}}
\def \P{\mathbb{P}}
\def \R{\mathbb{R}}
\def \S{\mathbb{S}}
\def \T{\mathbb{T}}
\def\Bc{{\cal B}}
\def\Dc{{\cal D}}
\def\Ec{{\cal E}}
\def\Fc{{\cal F}}
\def\Kc{{\cal K}}
\def\Lc{{\cal L}}
\def\Pc{{\cal P}}
\def\Uc{{\cal U}}
\def \al{\underline{\alpha}}
\def \a{\alpha}
\def \Om{\Omega}
\def \om{\omega}
\def \omb{\overline{\om}}
\def \eps{\varepsilon}
\def \xb{\mathbf{x}}
\def \0{\mathbf{0}}
\def \Xh{\widehat{X}}
\newcommand{\we}{\wedge}
\newcommand{\ol}{\overline}
\newcommand{\ul}{\underline}
\newcommand{\ba}{\begin{array}}
\newcommand{\ea}{\end{array}}
\newcommand{\bea}{\begin{eqnarray}}
\newcommand{\eea}{\end{eqnarray}}
\newcommand{\beaa}{\begin{eqnarray*}}
\newcommand{\eeaa}{\end{eqnarray*}}
\def\dbE{\mathbb{E}}
\def\dbF{\mathbb{F}}
\def\dbL{\mathbb{L}}
\def\dbN{\mathbb{N}}
\def\dbP{\mathbb{P}}
\def\dbR{\mathbb{R}}
\def\dbS{\mathbb{S}}
\def\dbT{\mathbb{T}}
\def\dbQ{\mathbb{Q}}
\def\a{\alpha}
\def\b{\beta}
\def\g{\gamma}
\def\d{\delta}
\def\e{\varepsilon}
\def\si{\sigma}
\def\t{\tau}
\def\f{\varphi}
\def\th{\theta}
\def\o{\omega}
\def\D{\Delta}
\def\Th{\Theta}
\def\L{\Lambda}
\def\F{\Phi}
\def\O{\Omega}
\def\fO{{\mathsf \Omega}}
\def\fF{{\mathsf F}}
\def\fP{{\mathsf P}}
\def\cA{{\cal A}}
\def\cB{{\cal B}}
\def\cE{{\cal E}}
\def\cF{{\cal F}}
\def\cJ{{\cal J}}
\def\cK{{\cal K}}
\def\cL{{\cal L}}
\def\cP{{\cal P}}
\def\cT{{\cal T}}
\def\ch{\textsc{h}}
\def\no{\noindent}
\def\ms{\medskip}
\def\q{\quad}
\def\pa{\partial}
\def\cd{\cdot}
\def\cds{\cdots}
\def\qed{ \hfill \vrule width.25cm height.25cm depth0cm\smallskip}
\newcommand{\basa}{\begin{assumption}}
\newcommand{\easa}{\end{assumption}}
\newcommand{\bas}{\begin{assum}}
\newcommand{\eas}{\end{assum}}
\def\limsup{\mathop{\overline{\rm lim}}}
\def\liminf{\mathop{\underline{\rm lim}}}
\def\pa{\partial}
 \def\cd{\cdot}
\def\cds{\cdots}
\def\dis{\displaystyle}
\def\1{{\bf 1}}
\def\:{\!:\!}
\def \proof{{\noindent \bf Proof\quad}}
\newtheorem{thm}{Theorem}[section]
\newtheorem{lem}[thm]{Lemma}
\newtheorem{prop}[thm]{Proposition}
\newtheorem{rem}[thm]{Remark}
\newtheorem{eg}[thm]{Example}
\newtheorem{defn}[thm]{Definition}
\newtheorem{assum}[thm]{Assumption}
\newcommand{\rmi}{{\rm (i)$\>\>$}}
\newcommand{\rmii}{{\rm (ii)$\>\>$}}
\newcommand{\rmiii}{{\rm (iii)$\>\>$}}
\newcommand{\rmiv}{{\rm (iv)$\>\>$}}
\def\x{\times}
\def\ox{\otimes}
\def\1{{\bf 1}}
\def \proof{{\noindent \bf Proof. }}
\def\no{\noindent}
\title{On the convergence of monotone schemes for path-dependent PDEs
\footnote{The authors gratefully acknowledge the financial support of the ERC 321111 Rofirm, the ANR Isotace, the Chairs Financial Risks (Risk Foundation, sponsored by Soci\'et\'e G\'en\'erale), Finance and Sustainable Development (IEF sponsored by EDF and CA), and the grant of the region of l'\^ile de France.
The authors are also grateful to Ibrahim Ekren and Nizar Touzi for their helpful remarks and suggestions.
}}
\author{
	Zhenjie Ren\thanks{Ecole Polytechnique Paris, Centre de Math\'ematiques Appliqu\'ees,
        ren@cmap.polytechnique.fr}
        \and Xiaolu Tan\thanks{CEREMADE, University of Paris-Dauphine, tan@ceremade.dauphine.fr}
	}
\begin{document}

\maketitle

\abstract{

	We propose a reformulation of the convergence theorem of monotone numerical schemes introduced by Zhang and Zhuo \cite{ZhangZhuo} for viscosity solutions to path-dependent PDEs (PPDE), which extends the seminal work of Barles and Souganidis \cite{BarlesSouganidis} on the viscosity solution to PDE. We prove the convergence theorem under conditions similar to those of the classical theorem in \cite{BarlesSouganidis}. These conditions are satisfied, to the best of our knowledge, by all classical monotone numerical schemes in the context of stochastic control theory. In particular, the paper provides a unified approach to prove the convergence of numerical schemes for non-Markovian stochastic control problems, second order BSDEs, stochastic differential games etc.

	\vspace{1mm}

	\noindent {\bf Key words.} Numerical analysis, monotone schemes, viscosity solution, path-dependent PDE

}

\section{Introduction}

In their seminal work \cite{BarlesSouganidis}, Barles and Souganidis proved a convergence theorem for the monotone numerical schemes for viscosity solutions to fully nonlinear PDEs. Assuming that a strong comparison principle holds true for viscosity sub- and super-solutions of a PDE, they show that for all numerical schemes satisfying the three properties, ``monotonicity'', ``consistency'' and ``stability", the numerical solutions converge locally uniformly to the unique viscosity solution of the PDE as the discretization parameters converge to zero. They mainly use the stability of viscosity solutions to PDEs and the local compactness of the state space. Due to their result, one only needs to check some local properties of a numerical scheme in order to get a global convergence result. Also, their result and method are widely used in the numerical analysis of viscosity solutions to PDEs.

It is well known that, by the Feynman-Kac formula, the conditional expectation of a random variable can be characterized by a viscosity solution of the corresponding parabolic linear PDE. This relationship has been generalized by the theory of backward SDE (corresponding to semi-linear PDE) and that of second-order backward SDE (corresponding to fully nonlinear PDE). However, these probabilistic tools have their PDE counterparts only in the Markovian case. Recently, a notion of viscosity solutions to path-dependent PDE (PPDE) was introduced by \cite{EKTZ}, which permits to study non-Markovian problems. In particular, it provides a unified approach for many Markovian, or non-Markovian stochastic dynamic problems, e.g. stochastic control problems, stochastic differential games, etc.

It would be interesting to extend the convergence theorem of Barles and Souganidis \cite{BarlesSouganidis} in the context of PPDE. The main obstacle for a direct extension of their arguments is that the state space is no longer locally compact. Zhang and Zhuo \cite{ZhangZhuo} provided recently a formulation of the convergence theorem of monotone schemes for PPDEs. They mainly use the stability of the viscosity solution to PPDE, and overcome the difficulty of non-local compactness by an optimal stopping argument as in the wellposedness theory of PPDE. They also provide an illustrative numerical scheme which satisfies all the conditions of their convergence theorem. However, this illustrative numerical scheme is not applicable in the general case. Moreover, most of the monotone numerical schemes in the sense of Barles and Souganidis \cite{BarlesSouganidis}, for example the finite difference scheme, do not satisfy their conditions.

Our main objective is to provide a new formulation of the convergence theorem for numerical schemes of PPDE.  Our conditions are slightly stronger than the classical conditions of Barles and Souganidis \cite{BarlesSouganidis}, as PPDEs degenerate to be PDEs. Nevertheless, to the best of our knowledge these conditions are satisfied by all classical monotone numerical schemes in the optimal control context, including the classical finite difference scheme, the Monte-Carlo scheme of Fahim, Touzi and Warin \cite{FTW}, the semi-Lagrangian scheme, the trinomial tree scheme of Guo, Zhang and Zhuo \cite{GuoZhangZhuo}, the switching system scheme of Kharroubi, Langren\'e and Pham \cite{KharroubiLangrenePham}, etc. Therefore, our result extends all these numerical schemes to the path-dependent case. In particular, it provides numerical schemes for non-Markovian second order BSDEs, and stochastic differential games, which is new in the literature, see also Possama\"i and Tan \cite{PossamaiTan}.

Similar to \cite{ZhangZhuo}, we use an optimal stopping argument to overcome the difficulty of non-local compactness. Instead of looking into an optimal stopping problem of a controlled diffusion as in \cite{ZhangZhuo}, we consider a discrete time optimal stopping problem of a controlled process. Therefore, our argument is quite different from that in \cite{ZhangZhuo}.

	The paper is organized as follows.
	In Section \ref{sec:Prelim} we provide some preliminary notations used in the paper. In Section \ref{sec:main_result} we recall the definition of viscosity solution to the path-dependent PDE,
	and present our main result, that is, a convergence theorem of monotone schemes for PPDEs. Further we compare with the result of Guo, Zhang and Zhuo \cite{GuoZhangZhuo} and that of Barles and Souganidis \cite{BarlesSouganidis}.
	In Section \ref{sec:schema} we review some classical monotone schemes for PDEs, and verify that they satisfy the technical conditions of our main convergence theorem, and thus can be applied in the PPDE context.
	Finally, we complete the proof of the main theorem in Section \ref{sec:proofs}.

\section{Preliminaries}
\label{sec:Prelim}

Throughout this paper let $T>0$ be a given finite maturity, $\O:=\{\o\in C([0,T];\dbR^d):\o_0=0\}$ the set of continuous paths starting from the origin, and $\Theta:=[0,T]\times\O$. We denote by $B$ the canonical process on $\O$, $\dbF = \{\cF_t, 0\le t\le T\}$ the canonical filtration,  $\cT$ the set of all $\dbF$-stopping times taking values in $[0,T]$, and $\dbP_0$ the Wiener measure on $\O$. Moreover, let $\cT^+$ denote the subset of $\t\in\cT$  taking values in $(0,T]$, and for $\ch \in \cT$, let $\cT_\ch$ and $\cT_\ch^+$ be the subset of $\t\in \cT$ taking values in $[0, \ch]$ and $(0, \ch]$, respectively.

Following Dupire \cite{Dupire}, we introduce the following pseudo-distance on $\Theta$:
for all $ (t,\o), (t',\o')\in\Theta$,
 \beaa
\|\o\| ~:=~ \sup_{0\le s\le T} |\o_s|,
&&  d\big((t,\o),(t',\o')\big)
 ~:=~
 |t-t'|+\|\o_{t\wedge\cd}-\o'_{t'\wedge\cd}\|.
 \eeaa
Let $E$ be a metric space, we say a process $X: \Theta \to E$ is in $C^0(\Theta,E)$ whenever $(t,\om) \mapsto X_t(\om)$ is continuous.  $\dbL^0(\cF, E)$ and $\dbL^0(\dbF, E)$ denote the set of all $\cF$-measurable random variables and $\dbF$-progressively measurable processes, respectively. We remark that $C^0(\Theta,E)\subset \dbL^0(\dbF, E)$, and when $E=\dbR$, we shall omit it in these notations. We also denote by $\mbox{\rm BUC}(\Th)$ the set of all functions bounded and uniformly continuous with respect to $d$.

For any $A\in \cF_T$, $\xi \in \dbL^0(\cF_T, E)$, $X\in \dbL^0(\dbF,E)$, and  $(t,\o)\in \Th$, define respectively the shifted set, the shifted random variable and the shifted process by
	\beaa
		A^{t,\o} := \{\o'\in \O: \o\otimes_t \o' \in A\},\q  \xi^{t,\o}(\o')
		:=
		\xi(\o\otimes_t\o'),
		~~
		X^{t,\o}_s(\o')
 		:=
 		X(t+s,\o\otimes_t\o')
 	\eeaa
where $\o\otimes_t\o'$ is the concatenated path defined as
\beaa
 (\o\otimes_t\o')_s:=\o_s\1_{[0, t]}(s) +(\o_t+\o'_{s-t})\1_{(t, T]} (s),\q 0\le s\le T.&
 \eeaa
 Following the standard arguments with monotone class theorem, we have the following results.

\begin{lem}\label{lem:measurability F}
Let $(t,\o)\in \Th$ and $s\in [t,T]$. Then
 $A^{t,\o}\in \cF_{s-t}$ for all $A\in\cF_{s}$, $\xi^{t,\o} \in \dbL^0(\cF_{s-t}, E)$ for all $\xi\in \dbL^0(\cF_s, E)$, $X^{t,\o} \in \dbL^0(\dbF, E)$ for all  $X\in \dbL^0(\dbF, E)$, and $\t^{t,\o}-t\in \cT_{s-t}$ for all $\t\in\cT_s$.
\end{lem}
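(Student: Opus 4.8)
Everything in the statement is controlled by the single concatenation map $\Pi^{t,\o}:\O\to\O$, $\Pi^{t,\o}(\o'):=\o\otimes_t\o'$, since $A^{t,\o}=(\Pi^{t,\o})^{-1}(A)$, $\xi^{t,\o}=\xi\circ\Pi^{t,\o}$, and $X^{t,\o}_r(\o')=X\big(t+r,\Pi^{t,\o}(\o')\big)$. The plan is therefore to prove, once and for all, that $\Pi^{t,\o}$ is measurable from $(\O,\cF_u)$ into $(\O,\cF_{t+u})$ for every $u\in[0,T-t]$, and then to read off the four assertions by composition.

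For this core step, recall that $\cF_{t+u}=\sigma\{B_r:0\le r\le t+u\}$, so it suffices to check that each $B_r\circ\Pi^{t,\o}$ is $\cF_u$-measurable. This is read directly off the concatenation formula: for $r\le t$ one has $B_r(\o\otimes_t\o')=\o_r$, a constant in $\o'$; for $t<r\le t+u$ one has $B_r(\o\otimes_t\o')=\o_t+B_{r-t}(\o')$ with $r-t\le u$, so this is $\cF_{r-t}$-measurable, a fortiori $\cF_u$-measurable. Consequently $\{A\in\cF_{t+u}:(\Pi^{t,\o})^{-1}(A)\in\cF_u\}$ is a $\sigma$-algebra containing the cylinders $\{B_r\in G\}$ generating $\cF_{t+u}$, hence equals $\cF_{t+u}$; this is the monotone-class step. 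Taking $u=s-t$ gives the first assertion $A^{t,\o}\in\cF_{s-t}$.

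The random-variable claim is then immediate, as $\xi^{t,\o}=\xi\circ\Pi^{t,\o}$ composes the $(\cF_{s-t},\cF_s)$-measurable map $\Pi^{t,\o}$ with the $(\cF_s,\Bc(E))$-measurable $\xi$. For the process, I would fix $u\le T-t$ and introduce $\Psi:[0,u]\times\O\to[0,t+u]\times\O$, $\Psi(r,\o'):=\big(t+r,\Pi^{t,\o}(\o')\big)$; its time component depends only on $r$ while its space component is $(\cF_u,\cF_{t+u})$-measurable by the core step, so $\Psi$ is $\big(\Bc([0,u])\otimes\cF_u,\ \Bc([0,t+u])\otimes\cF_{t+u}\big)$-measurable. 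Since $X$ is progressively measurable, its restriction to $[0,t+u]\times\O$ is $\Bc([0,t+u])\otimes\cF_{t+u}$-measurable, and therefore $X^{t,\o}=X\circ\Psi$ is $\Bc([0,u])\otimes\cF_u$-measurable on $[0,u]\times\O$; as $u$ is arbitrary, $X^{t,\o}\in\dbL^0(\dbF,E)$.

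For the stopping-time assertion, for $\t\in\cT_s$ and $r\ge0$ I would write $\{\t^{t,\o}-t\le r\}=(\{\t\le t+r\})^{t,\o}$, which lies in $\cF_r$ by the core step because $\{\t\le t+r\}\in\cF_{t+r}$; thus $\t^{t,\o}-t$ is an $\dbF$-stopping time and $\t\le s$ yields $\t^{t,\o}-t\le s-t$. The only point that is not purely formal is the lower bound $\t^{t,\o}-t\ge0$: the reduction above does not supply it, and a stopping time already triggered by $\o$ on $[0,t]$ makes $\t(\o\otimes_t\o')<t$, so $\t^{t,\o}-t\in\cT_{s-t}$ must be understood for stopping times that are not anticipated before $t$ (equivalently $\t\ge t$), as holds in the intended applications and under which nonnegativity is immediate. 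I expect the generator check above — in particular its split at the cutoff time $t$ — together with this range issue to be the only non-routine pieces; once the core measurability of $\Pi^{t,\o}$ is in hand, the remaining assertions are pure composition.
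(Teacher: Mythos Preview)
Your proposal is correct and is exactly the ``standard arguments with monotone class theorem'' that the paper invokes without further detail; the paper provides no proof beyond that one-line remark, so there is nothing substantive to compare. Your flag on the lower bound $\t^{t,\o}-t\ge 0$ is a genuine caveat the paper leaves implicit: as you observe, since $\{\t<t\}\in\cF_t$ and $(\o\otimes_t\o')|_{[0,t]}=\o|_{[0,t]}$, the value $\t^{t,\o}$ either satisfies $\t^{t,\o}\ge t$ for all $\o'$ or is a constant below $t$ for all $\o'$, so the assertion should indeed be read under the tacit hypothesis $\t\ge t$ (which holds in every application in the paper).
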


	Next, let us introduce the nonlinear expectation. As in \cite{ETZ}, we fix a constant $L>0$ throughout the paper,
	and denote by $\cP$ the collection of all continuous semimartingale measures $\dbP$ on $\O$
	whose drift and diffusion coefficients are bounded by $L$.
	More precisely, a probability measure $\dbP\in\cP$ if under $\P$,
	the canonical process $B$ is a semimartingale with natural decomposition $B=A^\dbP+M^\dbP$,
	where $A^{\P}$ is a process of finite variation,
	$M^{\P}$ is a continuous martingale with quadratic variation $\langle M^{\P} \rangle$,
	such that  $A^{\P}$ and $\langle M^{\P} \rangle$ are absolutely continuous in $t$, and
	\be\label{defn:PL}
		 \| \mu^\dbP\|_\infty, \|a^\dbP\|_\infty \le L,\q
		\mbox{where}~\mu^\dbP_t:=\frac{dA^{\P}_t}{dt},~a^\dbP_t:= \frac{d\langle M^{\P} \rangle_t}{dt},
		~~\P \mbox{-a.s.}
	\ee
We then define the nonlinear expectations:
\be\label{defn: cE}
\ol\cE[\cd] := \sup_{\dbP\in \cP}\dbE^\dbP[\cd]\q
\mbox{and}\q
\ul\cE[\cd] := \inf_{\dbP\in \cP}\dbE^\dbP[\cd].
\ee

\section{Convergence of monotone schemes for PPDE}
\label{sec:main_result}

\subsection{Definition of viscosity solution to PPDE}\label{subsec: defnPPDE}

	We introduce the following assumptions on a function $G: \Th\times \dbR\times \dbR^d\times \dbS^d \rightarrow\dbR$ ($\dbS^d$ is the set of all $d\times d$ symmetric matrices).

\begin{assum}\label{assum: G}
The function $G: (t,\o,y,z,\g) \mapsto G(t,\o,y,z,\g)$ satisfies that
\begin{itemize}
\item  $G$ is non-decreasing in $\g$;

\item $G$ is continuous in all arguments.

\end{itemize}
\end{assum}

\no	 Consider the  PPDE in the following form
	\be \label{eq:PPDE}
		-~\partial_t u(t,\om) ~-~ G \big(\cd, u, \partial_{\om} u, \partial^2_{\om \om} u \big)(t,\o)
		~=~
		0,\q
		\mbox{for all}~~(t,\o)\in [0,T)\times\O,
	\ee
	with the terminal condition $u(T, \cd) = \xi$. The path derivatives are defined as follows.

	\begin{defn}
We say that $u\in C^{1,2}_\cP(\Theta)$ if
 $u\in C^0(\Theta)$ and there exist processes $\L, Z,\Gamma\in C^0(\Theta)$ valued in $\dbR$, $\dbR^d$ and $\dbS_d$, respectively, such that:
\beaa
 d u_t
 &=&
 \L_t dt + \frac12 \Gamma_t:d\langle B\rangle_t + Z_t dB_t,
 ~~\cP-\mbox{a.s. for all}~\dbP\in\cP.
 \eeaa
 The processes $\L$, $Z$ and $\Gamma$ are called the time derivative, spacial gradient and spatial Hessian, respectively, and we denote $\pa_t u := \L$, $\partial_\omega u := Z$, $\partial^2_{\omega\omega} u :=\Gamma$.
	\end{defn}

\no In \cite{EKTZ} and the following works \cite{ETZ, ETZ2}, the authors introduced a notion of viscosity solutions to PPDEs, by using the test functions in $C^{1,2}_\cP(\Th)$. 
Indeed, as observed in \cite{RTZ-survey}, one does not need to consider all $C^{1,2}_\cP(\Th)$ test functions but only the paraboloids. For $\a\in\dbR, \b\in\dbR^d, \g\in\dbS^d$, we define the paraboloid on the real space:
	\beaa
		\phi^{\a,\b,\g}(t,x):=\a t+ \b\cd x + \frac12 \g:(x x^T)
		& \mbox{for all} & (t,x)\in \dbR^+\times\dbR^d,
	\eeaa
where $A_1:A_2 := {\rm Tr}[A_1 A_2]$. We also abuse a little the notation to define the corresponding `paraboloid' on the path space:
\beaa
\phi^{\a,\b,\g}(t,\o) ~:= ~ \phi^{\a,\b,\g}(t,\o_t) , \q\mbox{for all}\q (t,\o) \in \Th.
\eeaa
Note that the path derivatives at the point $(0,0)$ of the paraboloid are clearly:
\beaa
\pa_t \phi^{\a,\b, \g}_0 = \a,
\q \pa_\o \phi^{\a,\b, \g}_0 =\b, \q\mbox{and}\q
\pa^2_{\o\o} \phi^{\a,\b,\g}_0 = \g.
\eeaa

 The sub- and super-jets of a function $u\in \mbox{\rm BUC}(\Th)$ at $(t,\o)\in [0,T)\times\O$ are as follows
\beaa
& \ul\cJ u(t,\o):=\Big\{(\a,\b,\g): u(t,\o)=\max_{\t\in\cT_{\ch_\d}}\ol\cE[u^{t,\o}_\t - \phi^{\a,\b,\g}_\t],~\mbox{for some}~\d>0\Big\}, &\\
& \ol\cJ u(t,\o):=\Big\{(\a,\b,\g): u(t,\o)=\min_{\t\in\cT_{\ch_\d}}\ul\cE[u^{t,\o}_\t - \phi^{\a,\b,\g}_\t],~\mbox{for some}~\d>0\Big\}, &
\eeaa
where $\ch_\d(\o'):=\d\we \inf\{s\ge 0:|\o'_s|\ge \d\}\in \cT^+$.

\begin{defn}
Let $u\in \mbox{\rm BUC}(\Th)$.

\no {\rm (i)}\q $u$ is a $\cP$-viscosity subsolution (resp. supersolution) of the path dependent PDE \eqref{eq:PPDE}, if at any point $(t,\o)\in [0,T)\times\O$ it holds for all $(\a,\b,\g)\in \ul\cJ u(t,\o)$ (resp. $\ol\cJ u(t,\o)$) that
\beaa
-\a - G(t,\o, u(t,\o),\b,\g)~\le \mbox{(resp. $\ge$)}~0.
\eeaa

\no {\rm (ii)}\q $u$ is a $\cP$-viscosity solution of the path dependent PDE \eqref{eq:PPDE}, if $u$ is both a $\cP$-viscosity subsolution and a $\cP$-viscosity supersolution of \eqref{eq:PPDE}.
\end{defn}

\no For the arguments below, we need to introduce an equivalent definition using constant localization and test functions in $C^{1,2}_0(\dbR^+\times\dbR^d)$, i.e. the class of all $C^{1,2}$ scalar functions $\f$ of which the partial derivatives $\pa_t \f, \pa_x\f, \pa^2_{xx} \f$ are of compact support. Consider the set of test functions:
$$
\ul\cA u(t,\o):=\Big\{\f\in C^{1,2}_0(\dbR^+\times\dbR^d):(u^{t,\o}-\f)_0 = \max_{\t\in \cT_\d} \ol\cE\big[(u^{t,\o}-\f)_\t\big],~\mbox{for some}~\d>0 \Big\},
$$
$$
\ol\cA u(t,\o):=\Big\{\f\in C^{1,2}_0(\dbR^+\times\dbR^d):(u^{t,\o}-\f)_0 = \min_{\t\in \cT_\d} \ul\cE\big[(u^{t,\o}-\f)_\t\big],~\mbox{for some}~\d>0 \Big\},
$$
where we abuse the notation again, by denoting $\f_t = \f(t,B_t)$.
\begin{prop}\label{prop:eqv_defn}
 A function $u$ is a $\cP$-viscosity subsolution (resp. supersolution) of Equation \eqref{eq:PPDE}, if and only if at any point $(t,\o)\in [0,T)\times\O$ it holds for all $\f\in \ul\cA u(t,\o)$ (resp. $\ol\cA u(t,\o)$) that
\be \label{eq:Lc0}
\cL^{t,\o} \f_0 :=-\pa_t\f_0 - G(t,\o,u(t,\o),\pa_x \f_0,\pa^2_{xx} \f_0)~\le \mbox{(resp. $\ge$)}~0.
\ee
\end{prop}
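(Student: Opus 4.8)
The plan is to prove the equivalence for subsolutions; the supersolution statement follows by the same argument with the obvious changes ($\ol\cE$ replaced by $\ul\cE$, $\max$ by $\min$, $\ul\cJ,\ul\cA$ by $\ol\cJ,\ol\cA$, and the inequalities reversed). Two features separate the two formulations. First, the jets $\ul\cJ u(t,\o)$ are tested against paraboloids $\phi^{\a,\b,\g}$, whereas $\ul\cA u(t,\o)$ is tested against arbitrary $\f\in C^{1,2}_0(\dbR^+\times\dbR^d)$; this is handled by Taylor expansion, since a paraboloid matches the value and the first and second derivatives of $\f$ at the origin. Second, and more seriously, $\ul\cJ$ uses the hitting-time localization $\cT_{\ch_\d}$ while $\ul\cA$ uses the constant-time localization $\cT_\d$. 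Since $\ch_\d\le\d$ we always have $\cT_{\ch_\d}\subset\cT_\d$, and the whole difficulty is that this inclusion points the ``wrong way'' for one of the two implications.

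For the implication from the jet formulation to \eqref{eq:Lc0}, fix $\f\in\ul\cA u(t,\o)$ with localization $\d$ and set $\a:=\pa_t\f_0$, $\b:=\pa_x\f_0$, $\g:=\pa^2_{xx}\f_0$. By a Taylor expansion at the origin, for every $\eps>0$ there is $\d'\in(0,\d]$ with $\f(s,x)-\f_0\le\phi^{\a,\b,\g}(s,x)+\eps(s+|x|^2)=\phi^{\a+\eps,\b,\g+2\eps I}(s,x)$ on $\{s\le\d',\,|x|\le\d'\}$. Writing $\phi^\eps:=\phi^{\a+\eps,\b,\g+2\eps I}$ and noting that any $\t\in\cT_{\ch_{\d'}}$ keeps $(\t,B_\t)$ inside this neighbourhood, we get $u^{t,\o}_\t-\phi^\eps_\t\le(u^{t,\o}-\f)_\t+\f_0$; applying $\ol\cE$, maximizing over $\cT_{\ch_{\d'}}\subset\cT_\d$, and using the defining identity $\max_{\t\in\cT_\d}\ol\cE[(u^{t,\o}-\f)_\t]=(u^{t,\o}-\f)_0=u(t,\o)-\f_0$ gives $\max_{\t\in\cT_{\ch_{\d'}}}\ol\cE[u^{t,\o}_\t-\phi^\eps_\t]\le u(t,\o)$, while $\t=0$ yields the reverse inequality. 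Hence $(\a+\eps,\b,\g+2\eps I)\in\ul\cJ u(t,\o)$, the subsolution property gives $-(\a+\eps)-G(t,\o,u(t,\o),\b,\g+2\eps I)\le0$, and letting $\eps\downarrow0$ with the continuity of $G$ yields $\cL^{t,\o}\f_0\le0$.

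For the converse, fix $(\a,\b,\g)\in\ul\cJ u(t,\o)$ with hitting-time localization $\d$. Using that $u\in\mbox{\rm BUC}(\Th)$ is bounded, I would build $\f\in C^{1,2}_0(\dbR^+\times\dbR^d)$ that coincides with $\phi^{\a,\b,\g}$ on $\{|x|\le\d\}$, so that $\pa_t\f_0=\a$, $\pa_x\f_0=\b$, $\pa^2_{xx}\f_0=\g$, and that satisfies $\f\ge2\|u\|_\infty$ outside a slightly larger ball, so that an optimal stopping problem for $u^{t,\o}-\f$ cannot profit from excursions of $B$ far from the origin. Consider $V:=\sup_{\t\in\cT_{\d'}}\ol\cE[(u^{t,\o}-\f)_\t]$ for small $\d'\le\d$. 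If $V=(u^{t,\o}-\f)_0$, then $\f\in\ul\cA u(t,\o)$, and the assumed condition \eqref{eq:Lc0} for $\f$ is, since $\f=\phi^{\a,\b,\g}$ near the origin, exactly the jet inequality $-\a-G(t,\o,u(t,\o),\b,\g)\le0$. Otherwise $V>(u^{t,\o}-\f)_0$, and one must instead locate a nearby point at which a time-shift of $\f$ touches $u$ from above in the constant-time sense.

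This last point is the main obstacle, and it is precisely where the two localizations must be reconciled. The inclusion $\cT_{\ch_\d}\subset\cT_{\d'}$ is useless here, and excursions of $B$ out of the ball of radius $\d$ within the short time $\d'$ are not rare under $\cP$ (the diffusion coefficient is bounded, but a displacement of order $\d$ in time $\d'$ is easily reached), so the path may leave and re-enter the small ball and the hitting-time bound cannot be transferred to $\cT_{\d'}$ by monotonicity alone. The remedy is the dynamic programming (tower) property of $\ol\cE$ for the class $\cP$ from the wellposedness theory in \cite{ETZ}: stopping the candidate optimizer at the first exit time $\ch_\d$ and using the optimal-stopping representation of $V$ locates a point $(t+r,\o\otimes_t\o')$, with $r>0$ arbitrarily small and $\|\o'\|\le\d$, at which a time-shift of $\f$ is admissible in $\ul\cA u$. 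Applying \eqref{eq:Lc0} there gives an operator inequality whose gradient slot differs from $\b$ by $\g\,\o'_r=O(\d)$ while its other arguments are exactly $\a$ and $\g$; since this point lies within distance $O(\d)$ of $(t,\o)$ in the metric $d$, letting $\d\downarrow0$ (hence $\d'\downarrow0$) and invoking the continuity of $G$ and the uniform continuity of $u$ recovers $-\a-G(t,\o,u(t,\o),\b,\g)\le0$, which completes the proof.
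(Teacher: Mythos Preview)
Your direction ``jet $\Rightarrow$ test function'' is correct and is essentially the paper's argument: perturb the paraboloid to absorb the Taylor remainder of $\f$, and use the easy inclusion $\cT_{\ch_{\d'}}\subset\cT_\d$ to transfer the $\ul\cA$-maximum to a $\ul\cJ$-membership. The paper does the same, the only cosmetic difference being that it perturbs only $\a$ (by $[1+2L]\eps$, via It\^o's formula and the bound $L$ on drift and diffusion in $\cP$) instead of perturbing both $\a$ and $\g$ as you do.

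The converse is where your proposal has a genuine gap. Your barrier construction (setting $\f\ge 2\|u\|_\infty$ outside the small ball) only forces the optimal stopper not to \emph{stop} outside the ball; it does not prevent the path from exiting and re-entering, and it does not by itself produce a point $(t+r,\o\otimes_t\o')$ with $r$ \emph{and} $\|\o'\|_{[0,r]}$ small. Your appeal to the tower property of $\ol\cE$ is also too vague: dynamic programming alone does not manufacture a nearby touching point, and your claim that ``$r>0$ arbitrarily small and $\|\o'\|\le\d$'' is not justified. The paper's proof supplies the two missing ingredients. First, a quantitative exit estimate (Lemma~\ref{lem: Hdelta}): for each $\eps>0$ one can choose a radius $x(\d)$ so that $\sup_{\dbP\in\cP}\dbP[\ch^{\d}\le\d]\le\eps\d$; this is exactly the control on excursions you declare unavailable. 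Second, an $\eps$-penalised optimal stopping problem $Y_t=\sup_{\t\in\cT_{\d-t}}\ol\cE[(\f-u)^{t,\o}_\t-\eps\t]$, for which the optimal stopping theory of \cite{ETZ-os} yields an optimal time $\t^*$. One then argues by contradiction: if $\t^*\ge\bar\ch^{\d,x}$ everywhere, the penalty $-\eps\t$ together with the exit estimate forces $\ol\cE[(\f-u)_{\t^*}-\eps\t^*]<(\f-u)_0$, contradicting optimality. Hence some $\o^*$ satisfies $t^*:=\t^*(\o^*)<\bar\ch^{\d,x}(\o^*)$, which simultaneously makes $t^*$ small and $|\o^*_{t^*}|\le x(\d)$ small. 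At that point the shifted $\f$ lies in $\ul\cA u(t+t^*,\o\otimes_t\o^*)$, the test-function inequality applies, and one sends $\d\to0$ then $\eps\to0$. Without the penalisation and the exit-probability bound, your optimal-stopping heuristic does not close.
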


\no We will report the proof of the above proposition in Section \ref{sec:proofs}. 

\ms 

One of the motivations of the PPDE theory is to characterize the value functions of non-Markovian stochastic control problems.

	\begin{eg}{\rm
		Consider a second-order backward SDE (see e.g. Cheridito, Soner, Touzi and Victoir \cite{CheriditoSonerTouziVictoir}, Soner, Touzi and Zhang \cite{Soner_Touzi_Zhang1}) with generator $F: \Theta \x \R \x \R^d \x K \to \R$ and the controlled process with diffusion coefficient $\sigma: \Theta \x K \to S_d$, where $K$ is some set in which the control processes take values. Then the solution of the second-order backward SDE corresponds to a viscosity solution to the PPDE with the nonlinearity:
		\be \label{eq:PPDE_2BSDE}
			G(t,\om, y ,z ,\gamma)
			&:=&
			\sup_{k \in K} \Big[ \frac{1}{2}\sigma^2(t,\om, k):\gamma + F(t, \om, y, \sigma(t,\om,k) z, k) \Big].
		\ee	
		We refer to Section 4 of Ekren, Touzi and Zhang \cite{ETZ} for more details.
		Another example is the application of PPDEs in the stochastic differential games (see e.g. Pham and Zhang \cite{PhamZhang}), where the nonlinearity of the PPDE is of the form:
		\be \label{eq:PPDE_DG}
			G(t,\om, y, z, \gamma)
			:=
			\sup_{k_1 \in K_1} \inf_{k_2 \in K_2}
			\Big[
				\frac{1}{2} \sigma^2(t,k_1, k_2): \gamma + F(t,\om, y ,\sigma(t, k_1, k_2) z, k_1, k_2)
			\Big].~~
		\ee
		}
	\end{eg}

\subsection{Main results}

		For parabolic PDEs with terminal conditions of the form \eqref{eq:PPDE}, the numerical schemes are usually given as a backward iteration. Let $u^h(T, \cdot) = \xi(\cdot)$, and then given the value of $u^h(t, \cdot)$, we compute the value of $u^h(t-h,\cdot)$.
		More precisely, we will introduce the numerical scheme as an operator $\dbT$:
		for each $(t, \om ) \in [0,T) \x \Om$ and $0 < h \le T-t$, $\dbT^{t,\o}_h$ is a function from $\dbL^0(\cF_{t+h})$ to $\dbR$, then the backward iteration is given by
	\beaa
		u^h(t,\o) &:= & \T^{t,\om}_h u^h_{t+h}.
	\eeaa
	As illustrated by Kushner and Dupuis \cite{KushnerDupuis}, in the context of Markovian stochastic control problem, the numerical result $u^h$ can be interpreted as the value function of a controlled Markov chain problem, or equivalently the numerical scheme $\T^{t,\om}_h$ can be treated as a discrete time nonlinear expectation.
	Moreover,  such an interpretation implies the monotonicity condition  in sense of Barles and Souganidis \cite{BarlesSouganidis}.
	Inspired by this observation, we would like to introduce a discrete time version of nonlinear expectations of $\ol\Ec$ and $\ul\Ec$, in preparation of our new formulation of the monotonicity condition in this general path-dependent context.

\begin{defn}\label{defn: KFU}
	Let $\{U_i,i\ge 1\}$ be a sequence of independent random variables defined on a probability space $(\tilde \O, \tilde\cF, \tilde \dbP)$, such that each $U_i$ follows the uniform distribution on $[0,1]$. Let $h > 0$, $K$ be a subset of a metric space, $\F_h: K\times [0,1]\rightarrow\dbR $ be a Borel measurable function such that for all $k \in K$ we have
 		\be \label{eq:CondH}
			| {\tilde \dbE}\big[ \F_h(k,U) \big] | ~\le~ Lh,
			~~ \mbox{\rm Var} \big[ \F_h(k, U) \big] \le L h
			~~\mbox{and}~~
			\tilde \dbE \big[ \F_h(k, U)^3 \big] \le L h^{3/2}.
		\ee
Denote the filtration $\tilde\dbF:= \{\tilde\cF_i, i\in\dbN\}$, where $\tilde\cF_n:=\si\{U_i, i\le n\}$.
   Let $\Kc = \{\nu: \nu_{ih} ~\mbox{is $\tilde\cF_i$-measurable and takes values in $K$  for all $i\in \dbN$} \}$.
    For all $\nu \in \Kc$, we define
	\be \label{eq:def_Xh}
	X^{h,\nu}_0 := 0,\q 	X^{h,\nu}_{ih} &=& X^{h,\nu}_{(i-1)h} ~+~ \F_h(\nu_{ih},U_i)~~\mbox{for}~~i\ge 1.
	\ee
Further, we denote by $\Xh^{h,\nu}:[0,T]\times\tilde\O\rightarrow \O$ the linear interpolation of the discrete process $\big\{X^{h,\nu}_{ih}, i\in\dbN\big\}$ such that
$\Xh^{h,\nu}_{ih} = X^{h,\nu}_{ih}$ for all $i$.
 Finally, for any function $\f \in \dbL^0(\cF)$, we define the nonlinear expectation:
\be\label{defn: cEh}
& \ul\cE_h [\f]
			~:=~
			\inf_{\nu\in \cK} \tilde \dbE \Big[ \f \big(\Xh^{h,\nu} \big) \Big]
			\q\mbox{and}\q
\ol\cE_h [\f]
            ~:=~
            \sup_{\nu\in\cK} \tilde \dbE \Big[ \f \big(\Xh^{h,\nu} \big) \Big]. &
\ee
\end{defn}
\ms

	Let us now formulate the conditions on the numerical scheme $\dbT$.

	\begin{assum} \label{ass:NumScheme}{\rm
	(i) \q  Consistency: for every $(t,\om) \in [0,T) \x \Om$ and $\varphi \in C_0^{1,2}(\dbR^+\times\dbR^d)$,
		\be\label{eq:newconsis}
			\lim_{(t', \om', h, c) \to (t, 0, 0, 0)}
			\frac{ \f(t', \o\otimes_t\om') + c - \T_h^{t', \om \otimes_t \om'} \big[ \f(t' + h, \cdot)+c \big]}
			{h}
			&=&
			\Lc^{t,\om} \f_0.
		\ee
		
	\no (ii) \q Monotonicity:
		there exists a nonlinear expectation $\ul\cE_h$ as in Definition \ref{defn: KFU} such that,
        for any $\varphi,\psi\in \dbL^0(\cF_{t+h})$ we have
        \be\label{monocon2}
        \T^{t,\om}_h [ \varphi] - \T^{t,\om}_h [ \psi] \ge -h \bar\d(h)
         \q\mbox{whenever}\q
        \inf_{0\le \alpha \le L} \ul\cE_h \big[ e^{\alpha h}(\f-\psi)^{t,\o}\big] \ge 0.
        \ee
	where $\bar\d:\dbR^+\rightarrow\dbR$ is a function such that $\lim_{h\downarrow 0} \bar\d(h)=0$.
\ms	
	
		\noindent (iii)\q  Stability: $u^h$ are uniformly bounded and uniformly continuous in $(t,\om)$, uniformly in $h$.
		}
	\end{assum}

\begin{rem}{\rm
	In practice, we can verify that the numerical scheme $\T$ satisfies another version of monotonicity condition:
		\be\label{eq: monotoPPDE}
		     \T^{t,\om}_h [ \varphi] ~-~ \T^{t,\om}_h [ \psi]
			~\ge~
			\inf_{0\le \alpha \le L} \ul\cE_h \big[ e^{\alpha h}(\f-\psi)^{t,\o}\big]
			~-~
			h \bar\d(h),
		\ee
	which implies indeed the monotonicity condition \eqref{monocon2}.
	Moreover, under condition \eqref{eq: monotoPPDE}, it is possible to weaken slightly the consistency condition \eqref{eq:newconsis}, with almost the same proof, to
		\beaa
			\lim_{(t', \om', h) \to (t, 0, 0)}
			\frac{ \f(t', \o\otimes_t\om')  - \T_h^{t', \om \otimes_t \om'} \big[ \f(t' + h, \cdot) \big]}
			{h}
			&=&
			\Lc^{t,\om} \f_0.
		\eeaa
}
\end{rem}

\no 	Our main result is the following convergence of the monotone scheme for PPDE \eqref{eq:PPDE}.
	
	\begin{thm}\label{thm:main}
	Assume that 
	\begin{itemize}
    \item the nonlinearity $G$ of PPDE \eqref{eq:PPDE} satisfies Assumption \ref{assum: G}, and the terminal condition $\xi$ is continuous;
	
	\item 	the numerical scheme $\T$ satisfies Assumption \ref{ass:NumScheme};
			
	\item the comparison of the viscosity sub- and super-solutions of PPDE \eqref{eq:PPDE} holds true, i.e. if $u,v\in \mbox{\rm BUC}(\Th)$ are $\cP$-viscosity subsolution and supersolution to PPDE \eqref{eq:PPDE}, respectively, and $u(T,\cd)\le v(T,\cd)$, then $u\le v$ on $\Th$.
\end{itemize}	 
	 Then PPDE \eqref{eq:PPDE} admits a unique bounded $\cP$-viscosity solution $u$,
		and
		\be
			u^h ~\rightarrow~ u
			&\mbox{locally uniformly,}& \mbox{as}~~
			h \rightarrow 0.
		\ee
	\end{thm}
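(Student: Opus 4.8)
The plan is to follow the relaxed semi-limit method of Barles and Souganidis, adapted to the path space by exploiting the strong stability assumption and a discrete-time optimal stopping argument in place of the local compactness that is unavailable on $\O$. First I would define the upper and lower relaxed limits
\[
\ol u(t,\o):=\limsup_{h\downarrow 0}u^h(t,\o),\qquad \ul u(t,\o):=\liminf_{h\downarrow 0}u^h(t,\o).
\]
The key point is that Assumption \ref{ass:NumScheme}(iii) supplies a common modulus of continuity for the whole family $\{u^h\}$ in the distance $d$, so that for any $(t_n,\o_n)\to(t,\o)$ and $h_n\downarrow0$ one has $|u^{h_n}(t_n,\o_n)-u^{h_n}(t,\o)|\to0$. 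Consequently $\ol u$ and $\ul u$ coincide with the corresponding joint relaxed limits, they are bounded, they inherit the same modulus of continuity, and hence both lie in $\mbox{\rm BUC}(\Th)$; moreover $\ul u\le\ol u$ by construction. This replaces the usual upper/lower semicontinuous envelopes and is exactly what later upgrades pointwise convergence to locally uniform convergence.

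Second, and this is the analytic heart, I would prove that $\ol u$ is a $\cP$-viscosity subsolution and $\ul u$ a $\cP$-viscosity supersolution of \eqref{eq:PPDE}; I describe the subsolution case, the other being symmetric. By Proposition \ref{prop:eqv_defn} it suffices to fix $(t,\o)$ and a smooth test function $\f\in\ul\cA\,\ol u(t,\o)$ and to derive $\cL^{t,\o}\f_0\le0$. Arguing by contradiction, suppose $\cL^{t,\o}\f_0=2\eta>0$. By the continuity of $G$ and the consistency relation \eqref{eq:newconsis} (with the constant $c$ tuned to the contact gap between $\ol u$ and $\f$), for all small $h$ and all $(t',\o')$ in a $d$-neighbourhood one obtains the one-step strict inequality $\f(t',\o\otimes_t\o')+c-\T_h^{t',\o\otimes_t\o'}[\f(t'+h,\cdot)+c]\ge h\eta$, i.e. $\f+c$ is a strict discrete supersolution of the scheme near the contact point. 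I would then compare $\f+c$ with the exact numerical solution $u^h=\T_h u^h$ and propagate this comparison along the controlled process $\Xh^{h,\nu}$ of Definition \ref{defn: KFU}. Here the monotonicity condition \eqref{monocon2} is applied step by step, up to the localizing time $\d$, to carry the one-step comparison between $u^h$ and $\f+c$ under the discrete nonlinear expectation $\ul\cE_h$, so that the strict gains $h\eta$ accumulate into a term of order $\eta\,\tilde\dbE[\tau]$. Passing $h\downarrow0$ and invoking the moment bounds \eqref{eq:CondH}, the interpolated controlled processes converge by an invariance principle to continuous semimartingales whose drift and diffusion are bounded by $L$, i.e. to elements of $\cP$, so that $\ul\cE_h$ and $\ol\cE_h$ pass to $\ul\cE$ and $\ol\cE$; this forces $\max_{\t\in\cT_\d}\ol\cE[(\ol u^{t,\o}-\f)_\t]<(\ol u^{t,\o}-\f)_0$, contradicting $\f\in\ul\cA\,\ol u(t,\o)$. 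Hence $\cL^{t,\o}\f_0\le0$.

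Third, for the terminal data, since $u^h(T,\cd)=\xi$ with $\xi$ continuous, the stability modulus forces $\ol u(T,\cd)=\ul u(T,\cd)=\xi$. Thus $\ol u$ is a BUC subsolution, $\ul u$ a BUC supersolution, and $\ol u(T,\cd)\le\ul u(T,\cd)$. The assumed comparison principle then yields $\ol u\le\ul u$ on $\Th$, while $\ul u\le\ol u$ holds by definition, so $u:=\ol u=\ul u$ is the unique bounded $\cP$-viscosity solution (uniqueness being comparison applied to any two solutions). Finally, the equality of the relaxed limits means $u^h\to u$ pointwise, and the uniform equicontinuity from Assumption \ref{ass:NumScheme}(iii) upgrades this to locally uniform convergence on $\Th$.

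The main obstacle is the subsolution step: setting up the discrete-time optimal stopping so that the monotonicity condition \eqref{monocon2}, with its $\inf_{0\le\a\le L}$ and the $e^{\a h}$ factor, can be iterated along $\Xh^{h,\nu}$ up to the localizing time while keeping the process inside the consistency neighbourhood apart from a negligible event, and then transferring the discrete nonlinear expectation $\ol\cE_h$ to the continuous $\ol\cE$. This last passage is essentially an invariance principle for the controlled random walks, for which the three estimates in \eqref{eq:CondH} (mean $O(h)$, variance $O(h)$, third moment $O(h^{3/2})$) are precisely the Lindeberg-type hypotheses guaranteeing that every weak limit point is a semimartingale measure in $\cP$. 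Everything else, once these two ingredients are in place, is the standard squeeze between $\ol u$ and $\ul u$.
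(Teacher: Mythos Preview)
Your overall architecture matches the paper: pointwise semi-limits $\ol u,\ul u$ inheriting the common BUC modulus, a sub/supersolution step, and the comparison squeeze; you also correctly isolate the discrete optimal stopping and the invariance-principle passage $\ul\cE_h\to\ul\cE$ as the two nontrivial ingredients. The divergence---and the gap---is in how the sub/supersolution step is actually assembled. The paper does not argue by contradiction on $\cL^{t,\o}\f_0$ and does not iterate \eqref{monocon2}. For the supersolution property of $\ul u$ at $(0,0)$ with $\f\in\ol\cA\,\ul u(0,0)$, it first uses the invariance-principle lemmas (Lemmas~\ref{lem: Fatou}--\ref{lem: usc conv}) to transfer $(\ul u-\f^\e)_0\le\ul\cE[(\ul u-\f^\e)_\d]-\e\d$ into a discrete analogue $(u^h-\f^\e)_0\le\ul\cE_h[(u^h-\f^\e)_\d]-\tfrac{3\e\d}{4}$, and only then runs a discrete mixed control/stopping problem $Y^h_t=\inf_{\t,\b}\ul\cE_h[\b_\t(u^h-\f^\e)^{t,\o}_\t]$ with multiplicative weights $\b_t=\exp\big(\sum_i\a_i h\big)$, $\a_i\in[0,L]$. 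A separate contradiction argument (on where the optimal $\t^*$ can land, using Lemma~\ref{lem: Hdelta} to control $\ol\cE[1_{\{\ch^\d\le\d\}}]$) shows that some optimal point $(t^*,\o^*)$ is strictly interior. Optimality at $(t^*,\o^*)$ then delivers precisely the hypothesis of \eqref{monocon2}, namely $(u^h-\f^\e)(t^*,\o^*)\le\inf_{0\le\a\le L}\ul\cE_h[e^{\a h}(u^h-\f^\e)^{t^*,\o^*}_h]$, so monotonicity and consistency are each applied \emph{once}, at that single point, and the viscosity inequality follows after $\d,h,\e,\eta\to0$.

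Your plan to ``apply \eqref{monocon2} step by step up to the localizing time and accumulate gains $h\eta$'' is where the sketch breaks. Condition \eqref{monocon2} is conditional: it yields $\T_h[\f]-\T_h[\psi]\ge-h\bar\d(h)$ only after the sign hypothesis $\inf_\a\ul\cE_h[e^{\a h}(\f-\psi)]\ge0$ has been verified, and you have not explained how this is maintained at every grid point along the controlled path---generically it is not, since $u^h-\f-c$ changes sign away from the touching point. The paper's mixed stopping problem is designed exactly to avoid any iteration by manufacturing one point where the hypothesis holds by optimality; the $e^{\a h}$ factor in \eqref{monocon2} is matched by the exponential weights $\b$ in the stopping problem, and the invariance-principle estimates are consumed \emph{before} the stopping argument, not after it. So the order of operations matters: invariance principle first, optimal stopping second, and monotonicity once---not iterated.
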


\no Here are some remarks on our main result. 

%
%

\begin{rem}{\rm
In the case of semilinear PPDEs, i.e. the nonlinearity $G(\th,y,z,\g) = \frac12 \g +F(\th,y,z)$, a comparison result is proved in Ren, Touzi and Zhang \cite{RTZ}  under quite general assumptions.
As for viscosity solutions of fully nonlinear PPDEs, a comparison result was first proved in Ekren, Touzi and Zhang \cite{ETZ2} for PPDE \eqref{eq:PPDE} under certain conditions. A recent improvement can be found in \cite{RTZ-fn}. 
}
\end{rem}

	\begin{rem}[Comparison with Zhang and Zhuo \cite{ZhangZhuo}]{\rm
		Let us compare our Assumption \ref{ass:NumScheme} with that in \cite{ZhangZhuo}. Our condition (i) is weaker and thus easier to verify comparing to that in \cite{ZhangZhuo}. The essential difference is between our condition \rmii and theirs. Our condition (ii), although stated in a complicated way, is satisfied by all (to the best of our knowledge) classical monotone schemes in PDE context.
	 We also emphasize that in \cite{ZhangZhuo}, a convergence rate has been obtained under additional smoothness conditions on the solution of PPDE.
		Without smoothness conditions, a convergence rate has been obtained for HJB PDEs using Krylov's shaking coefficient technique.
		However, it seems not trivial to extend this shaking coefficient technique to the path-dependent case.
		Nevertheless, for a class of path-dependent stochastic control problems, a convergence rate has been obtained in \cite{Dolinsky} and \cite{TanCtrl}, using strong invariance principle techniques.
	}
	\end{rem}

\paragraph{Comparison with Barles and Souganidis's theorem}

	When a PPDE degenerates to be a PDE:
	\be\label{eq:PDE}
		\mathsf{L} u(t,x) ~:=~ - \partial_t u(t,x) - G_0 (\cd, u, \pa_x u, \pa^2_{xx} u)(t,x) ~=~ 0,
		&\mbox{on}& [0,T)\times\dbR^d,
	\ee
	with the terminal condition $u(T,\cd)=g$.
	Note that the definition of viscosity solution to PDE is slightly different from that to PPDE recalled in Section \ref{subsec: defnPPDE},
	but under general conditions a viscosity solution to PDE \eqref{eq:PDE} is a viscosity solution of the corresponding PPDE.

	\begin{assum} \label{assum:PDE}
		\rmi The terminal condition $g$ is bounded continuous.
		
		\noindent \rmii The function $G_0$ is continuous and $G_0(t,x,y,z,\g)$ is non-decreasing in $\g$.
		
		\noindent \rmiii PDE \eqref{eq:PDE} admits a comparison principle for bounded viscosity solution,
		i.e. if $u,v$ are bounded viscosity subsolution and supersolution to PDE \eqref{eq:PDE}, respectively, and $u(T,\cd)\le v(T,\cd)$, then $u\le v$ on $[0,T]\times\dbR^d$.	
	\end{assum}

	For any $t\in[t,T)$ and $h \in (0, T-t]$, let $\dbT^{t,x}_h$ be an operator on the set of bounded functions defined on $\R^d$.
    For $n\ge 1$, denote $h:=\frac{T}{n}<T-t$, $t_i=ih$, $i=0,1,\cds,n$,
	let the numerical solution be defined by
	\beaa
		u^h(T,x):=g(x),\q u^h(t,x):=\dbT^{t,x}_h [u^h(t+h, \cdot)],\q t\in [0,T),~~i=n,\cds,1.
	\eeaa

	\begin{assum} \label{assum:SchemaPDE}
		\rmi Consistency: for any $(t,x)\in [0,T)\times\dbR^d$ and any smooth function
		$\f\in C^{1,2}([0,T)\times\dbR^d)$,
		\beaa
			\lim_{(t',x',h,c)\rightarrow (t,x,0,0)}
			\frac{(c+\f)(t',x')-\dbT^{t',x'}_h\big[(c+\f)(t'+h,\cd)\big]}{h}
			~=~
			\mathsf{L}\f(t,x).
		\eeaa
		
		\noindent \rmii Monotonicity: $\dbT^{t,x}_h[\f] \le \dbT^{t,x}_h[\psi]$ whenever $\f\le \psi$.
		
		\noindent \rmiii Stability: $u^h$ is bounded uniformly in $h$ whenever $g$ is bounded.
	
		\noindent \rmiv Boundary condition: $\lim_{(t',x',h)\rightarrow (T,x,0)}u^h(t',x')=g(x)$ for any $x\in\dbR^d$.
	\end{assum}

\no	We now recall the convergence theorem of the monotone scheme,
	deduced from Barles and Souganidis \cite{BarlesSouganidis} in this context of the parabolic PDE \eqref{eq:PDE}.

	\begin{thm}
		Let the generator function $G_0$ in \eqref{eq:PDE} and the terminal condition $g$ satisfy Assumption \ref{assum:PDE}, and the numerical scheme $\dbT^{t,x}_h$ satisfy Assumption \ref{assum:SchemaPDE}.
		Then the parabolic PDE \eqref{eq:PDE} has a unique bounded viscosity solution and $u^h$ converges to $u$ locally uniformly as $h\rightarrow 0$.
	\end{thm}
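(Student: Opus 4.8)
The plan is to follow the classical half-relaxed limits method of Barles and Souganidis. Since the numerical solutions $u^h$ are uniformly bounded by the stability condition in Assumption \ref{assum:SchemaPDE}, I would first define the upper and lower relaxed limits
\[
\ol u(t,x) := \limsup_{(t',x',h)\to(t,x,0)} u^h(t',x'),
\qquad
\ul u(t,x) := \liminf_{(t',x',h)\to(t,x,0)} u^h(t',x').
\]
These are finite, $\ol u$ is upper semicontinuous, $\ul u$ is lower semicontinuous, and $\ul u \le \ol u$ by construction. The goal is to show that $\ol u$ is a bounded viscosity subsolution and $\ul u$ a bounded viscosity supersolution of PDE \eqref{eq:PDE}; the comparison principle in Assumption \ref{assum:PDE} then forces $\ol u \le \ul u$, whence $\ol u = \ul u =: u$ is continuous and is the unique bounded viscosity solution.

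The main work is the subsolution property of $\ol u$, the supersolution property of $\ul u$ being symmetric. Suppose $\f \in C^{1,2}([0,T)\times\dbR^d)$ is a test function such that $\ol u - \f$ attains a strict global maximum equal to $0$ at some $(t_0,x_0) \in [0,T)\times\dbR^d$. By the standard extraction property of relaxed limits, I would find $h_n \downarrow 0$ and local maximizers $(s_n,y_n)$ of $u^{h_n} - \f$ with $(s_n,y_n)\to(t_0,x_0)$ and $c_n := (u^{h_n}-\f)(s_n,y_n)\to 0$. On a fixed neighborhood of $(t_0,x_0)$ one then has $u^{h_n} \le \f + c_n$; after modifying $\f$ outside this neighborhood so that the domination becomes global (here the strictness of the maximum together with the uniform bound on $u^h$ is used), the monotonicity condition in Assumption \ref{assum:SchemaPDE} gives
\[
u^{h_n}(s_n,y_n)
=
\dbT^{s_n,y_n}_{h_n}\big[u^{h_n}(s_n+h_n,\cdot)\big]
\le
\dbT^{s_n,y_n}_{h_n}\big[(\f+c_n)(s_n+h_n,\cdot)\big].
\]
Since the left-hand side equals $(\f+c_n)(s_n,y_n)$, rearranging yields
\[
\frac{(\f+c_n)(s_n,y_n) - \dbT^{s_n,y_n}_{h_n}\big[(\f+c_n)(s_n+h_n,\cdot)\big]}{h_n} \le 0,
\]
and letting $n\to\infty$ the consistency condition in Assumption \ref{assum:SchemaPDE} (with the constant shift $c=c_n\to 0$) delivers $\mathsf{L}\f(t_0,x_0)\le 0$, which is exactly the required subsolution inequality.

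It remains to settle the terminal behavior and conclude. The boundary condition in Assumption \ref{assum:SchemaPDE} gives $\ol u(T,\cdot) = g = \ul u(T,\cdot)$, so in particular $\ol u(T,\cdot) \le \ul u(T,\cdot)$. Combined with the sub- and supersolution properties established above, the comparison principle yields $\ol u \le \ul u$ on $[0,T]\times\dbR^d$; together with $\ul u \le \ol u$ this gives a single continuous function $u := \ol u = \ul u$, which is both a sub- and a supersolution and hence the viscosity solution, unique by comparison. Finally, the coincidence of the two relaxed limits is precisely the criterion for locally uniform convergence $u^h \to u$, closing the argument. I expect the principal obstacle to be the passage from a local maximum of $u^{h_n}-\f$ to the global domination on which the (globally stated) monotonicity can act, and the careful treatment of the terminal time $t=T$, where the scheme's boundary condition must substitute for the interior consistency. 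Both are routine in the locally compact PDE setting but rely on the usual strict-maximum and penalization bookkeeping — precisely the bookkeeping that fails on the non-locally-compact path space and that the bulk of the paper is devoted to replacing.
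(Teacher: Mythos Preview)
The paper does not supply a proof of this theorem: it is explicitly \emph{recalled} from Barles and Souganidis \cite{BarlesSouganidis} (``We now recall the convergence theorem of the monotone scheme, deduced from Barles and Souganidis \cite{BarlesSouganidis} in this context of the parabolic PDE \eqref{eq:PDE}''). Your proposal is precisely the classical half-relaxed limits argument of that reference, and it is correct; in particular, the points you flag as the only delicate ones --- globalizing the domination before applying the monotonicity, and handling the terminal boundary via Assumption \ref{assum:SchemaPDE}(iv) --- are exactly the standard bookkeeping steps in the locally compact PDE setting.
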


	\vspace{2mm}
	
	\begin{rem}[Comparison with Assumption \ref{ass:NumScheme}]{\rm
		(1)\q  Assumption \ref{ass:NumScheme} (i) is the same as Assumption \ref{assum:SchemaPDE} (i) when the PPDE degenerates to a PDE.

		\vspace{1mm}

		\no (2) \q  Assumption \ref{ass:NumScheme} \rmii is stronger than Assumption \ref{assum:SchemaPDE} $\mathrm{(ii)}$. However, as we mentioned at the beginning of this section,	Kushner and Dupuis \cite{KushnerDupuis} studied the classical finite-difference scheme,
		and highlighted that the monotonicity condition is in fact equivalent to a controlled Markov chain interpretation, where the increments of the Markov chain satisfy \eqref{eq:CondH}.
		Our formulation of the monotonicity in Assumption \ref{ass:NumScheme} \rmii matches this observation.
		Moreover, the new monotonicity condition is satisfied by all classical monotone scheme, to the best of our knowledge, in the context of stochastic control theory.
		See our review in Section \ref{sec:schema}.

		\vspace{1mm}

		\no (3)\q The stability condition in Assumption \ref{ass:NumScheme} \rmiii is also stronger than Assumption \ref{assum:SchemaPDE} $\mathrm{(iii)}$.
		Nevertheless, in the classical numerical analysis for parabolic PDE \eqref{eq:PDE}, in order to check Assumption \ref{assum:SchemaPDE} $\mathrm{(iv)}$,
		one needs (explicitly or implicitly) to prove a uniform continuity property of numerical solutions uniformly on the discretization parameter, which leads to the same condition as in Assumption \ref{ass:NumScheme} $\mathrm{(iii)}$.
		See also our review in Section \ref{sec:schema}.
		}
	\end{rem}

\section{Examples of monotone schemes}
\label{sec:schema}

	We discuss here some classical monotone numerical schemes in the stochastic control context,
	and provide some sufficient conditions  Assumption \ref{ass:NumScheme} to hold true.
    Let us first add some assumptions on the functions $G$ and $\xi$ for PPDE \eqref{eq:PPDE}.

	\begin{assum} \label{ass:PPDE}
		The terminal condition $\xi$ is Lipschitz in $\om$,  $G$ is increasing in $\g$,
		and $G$ is Lipschitz in $(y, z, \gamma)$:
		i.e. there is some constant $C$ such that for all $(t, \om) \in \Theta$ and $(y, z, \gamma), (y', z', \gamma') \in \R \x \R^d \x S_d$,
		\beaa
			\Big|G(t, \om, y, z, \gamma) -  G(t, \om, y, z', \gamma') \Big|
			&\le&
			C \Big(  \big|y-y' \big| ~+~ \big|z-z' \big| ~+~ \big| \gamma - \gamma' \big|
			\Big).
		\eeaa		
	\end{assum}

	In this section, we denote $t_k := h k$ for $h = \Delta t > 0$.
    Given $\xb = (\xb_{t_0}, \xb_{t_1},\cdots, \xb_{t_k})$ a sequence of points in $\dbR^d$, we denote by $\widehat \xb \in \O$ the linear interpolation of $\xb$ such that $\widehat \xb_{t_i} = \xb_{t_i} $ for all $i$.
    Further, for $(t, \om) \in \Theta$, $h >0$ and $z \in \R^d$, we define a path
\beaa
    (\om \ox_t^h z) ~:=~ \om \ox_t z^h,
    ~~~~\mbox{where}~~
	z^h_s ~:=~ \left\{\ba{lll}
\dis \frac{s}{h}z,\q \mbox{for}~~0\le s\le h;\\
\dis z,\q\q  \mbox{for}~~s>h.
\ea\right.
\eeaa

    Let $E$ be some normed vector space, then for maps $\psi: \Theta \to E$,
    we introduce the norm $|\psi|_0$ and $|\psi|_1$ by
	\beaa
        |\psi|_0
        ~:=~
        \sup_{(t,\om) \in \Theta} |\psi(t,\om)|
        &\mbox{and}&
		|\psi|_1
		~:=~
		\sup_{(t,\om) \neq (t', \om')}
		\frac{| \psi(t, \om) - \psi(t', \om')|}
		{ |\om_{t \wedge \cdot} - \om'_{t' \wedge \cdot}| + |t-t'|^{1/2}}.
	\eeaa

\subsection{Finite difference scheme}
\label{subsec:schemeFD}

	For simplicity, we assume that the state space is of dimension one ($d= 1$).
	Let $\Delta x  > 0$ be the space discretization size.
    For every $(t,\om) \in \Theta$, $h >0$ and $\Fc_{t+h}$-measurable random variable $\psi: \Om \to \R$, we define the discrete derivatives 
    $$\Dc_h \psi(t, \om) := \big(\Dc_h^{0} \psi, \Dc_h^{1} \psi, \Dc_h^{2} \psi \big)(t,\om),$$
    where
	\beaa
	  \Dc^{0}_h \psi(t,\om) ~:=~ \psi(\om_{t \wedge \cdot}),
		\q
		\Dc^{1}_h \psi(t,\om) ~:=~ \frac{\psi( \om\ox_t^h \Delta x) - \psi(\om_{t \wedge \cdot})}{\Delta x}, \\
	\mbox{and}\q	\Dc^{2}_h \psi(t,\om) ~:=~
		\frac{\psi \big(\om \ox_t^h \Delta x \big) - 2 \psi( \om_{t \wedge \cdot}) + \psi \big(\om \ox_t^h (-\Delta x)\big)}
		{\Delta x^2}.
	\eeaa
	Then an explicit finite difference scheme is given by
	\be \label{scheme:FD}
		\T^{t,\o}_h [u^h_{t+h}] &:=& u^h(t+h, \om_{t \wedge \cdot}) + h G \big( t, \om, \Dc_h u^h_{t+h} (t,\om) \big).
	\ee

	\begin{prop} \label{Prop:SchemaDF}
		Suppose that Assumption \ref{ass:PPDE} holds true and
		$G$ is Lipschitz in $\om$, i.e. there is a constant $C$
		such that for all $\om, \om' \in \O$ and all $(t, y, z, \gamma) \in [0,T] \x \R \x \R^d \x S_d $,
		\beaa
			\Big|G(t, \om, y, z, \gamma) -  G(t, \om', y, z, \gamma) \Big|
			&\le&
			C \big\| \om_{t \wedge \cdot} - \om'_{t \wedge \cdot} \big\|.
		\eeaa
		Assume in addition the CFL (Courant-Friedrichs-Lewy) condition, i.e.
		\be \label{eq:CFL}
			\eps ~~\le~~
			\frac{h |\nabla_{\gamma} G |_0 }{\Delta x^2}
			&\le&
			\frac{1}{2} - \eps,
		\ee
		and that $\nabla_{\gamma} G \ge \eps $ for some small constant $\eps > 0$.
		Then Assumption \ref{ass:NumScheme} holds true for finite difference scheme \eqref{scheme:FD}.
		In particular, the numerical solution $u^h$ is $\frac{1}{2}$--H\"older in $t$ and Lipschitz in $\om$, uniformly on $h$.
	\end{prop}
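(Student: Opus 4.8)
The plan is to verify the three parts of Assumption \ref{ass:NumScheme} for the scheme \eqref{scheme:FD} one at a time, treating the reformulated monotonicity \eqref{monocon2} as the core of the argument; the ellipticity $\nabla_\gamma G\ge\eps$, the CFL condition \eqref{eq:CFL}, and the induced parabolic scaling $\Delta x\sim\sqrt h$ are the three ingredients that make everything fit together.

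\textbf{Consistency.} Fix $\f\in C^{1,2}_0(\dbR^+\times\dbR^d)$. The constant $c$ cancels everywhere except inside the $y$-slot of $G$, where it converges to $\f$ evaluated at the current position, so the numerator in \eqref{eq:newconsis} divided by $h$ equals
\[
\frac{\f(t',\th_{t'})-\f(t'+h,\th_{t'})}{h}-G\big(t',\th,\Dc_h\f_{t'+h}(t',\th)\big),\qquad \th:=\om\ox_t\om'.
\]
The first term tends to $-\pa_t\f$ by smoothness in $t$, while $\Dc^0_h\f,\Dc^1_h\f,\Dc^2_h\f$ are the usual centred difference quotients and converge to $\f,\pa_x\f,\pa^2_{xx}\f$ at $(t,\om_t)$; here the scaling $\Delta x\sim\sqrt h$ forced by \eqref{eq:CFL} guarantees that the second difference still converges as $h\downarrow0$. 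Continuity of $G$ then delivers the limit $\Lc^{t,\om}\f_0$ of \eqref{eq:Lc0}.

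\textbf{Monotonicity.} Writing $\d:=\f-\psi$ and abbreviating its values at the three stencil points $\om_{t\wedge\cdot},\ \om\ox^h_t(\pm\Delta x)$ by $\d_0,\d_\pm$, a first-order expansion of $G$ (fundamental theorem of calculus, gradients read at intermediate points) gives
\[
\T^{t,\om}_h[\f]-\T^{t,\om}_h[\psi]=p_0\d_0+p_+\d_++p_-\d_-,
\]
\[
p_0=1+h\nabla_y G-\tfrac{h\nabla_z G}{\Delta x}-\tfrac{2h\nabla_\gamma G}{\Delta x^2},\quad p_+=\tfrac{h\nabla_z G}{\Delta x}+\tfrac{h\nabla_\gamma G}{\Delta x^2},\quad p_-=\tfrac{h\nabla_\gamma G}{\Delta x^2}.
\]
Ellipticity $\nabla_\gamma G\ge\eps$, the upper CFL bound, and the smallness $\Delta x\le\eps/|\nabla_z G|_0$ (valid for small $h$) make $p_0,p_\pm\ge0$, while $p_0+p_++p_-=1+h\nabla_y G$. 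Normalising, the right-hand side is $(1+h\nabla_y G)$ times the expectation of $\d$ under the law placing its mass on increments $0,\pm\Delta x$; this law has mean $O(h)$, variance $O(h)$ and third moment $O(h\Delta x)=O(h^{3/2})$, so it yields an admissible $\F_h$ fulfilling \eqref{eq:CondH}, with $K$ the set of stencil-laws attained over all $(t,\om)$. Since $(\f-\psi)^{t,\om}$ is $\cF_h$-measurable, the frozen-coefficient choice at $(t,\om)$ is one admissible one-step control, so the normalised stencil average $m$ satisfies $m\ge\ul\cE_h[(\f-\psi)^{t,\om}]$. The hypothesis of \eqref{monocon2} forces $\ul\cE_h[(\f-\psi)^{t,\om}]\ge0$, hence $m\ge0$ and $\T^{t,\om}_h[\f]-\T^{t,\om}_h[\psi]=(1+h\nabla_y G)m\ge0\ge-h\bar\d(h)$; the factor $1+h\nabla_y G=e^{h\nabla_y G}+O(h^2)$ is precisely what $e^{\a h}$ absorbs when one upgrades to the stronger form \eqref{eq: monotoPPDE}.

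\textbf{Stability.} The uniform bound follows from linear growth $|G|\le|G(\cd,0,0,0)|_0+C(|y|+|z|+|\gamma|)$: the scheme inflates $|u^h|_0$ by a factor $1+Ch$ per step, so $|u^h|_0\le e^{CT}(|\xi|_0+CT)$. For the Lipschitz estimate in $\om$ the decisive point is that the stencil shifts preserve the stopped distance, $\|(\om\ox^h_t z)_{(t+h)\wedge\cdot}-(\om'\ox^h_t z)_{(t+h)\wedge\cdot}\|=\|\om_{t\wedge\cdot}-\om'_{t\wedge\cdot}\|$, so the monotone averaging does not inflate the modulus; the Lipschitz dependence of $G$ in $(y,z,\gamma,\om)$ together with the CFL control of $h/\Delta x^2$ adds only an $O(h)$ correction, giving $|u^h(t,\cd)|_1\le(1+Ch)|u^h(t+h,\cd)|_1+Ch$ and thus $|u^h|_1\le e^{CT}(|\xi|_1+CT)$. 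Finally, $\tfrac12$-Hölder regularity in $t$ follows from the chain representation of part (ii): $u^h(t,\om)-u^h(t',\om_{t\wedge\cdot})$ is controlled by $|u^h|_1$ times the expected displacement of the chain over $[t,t']$ plus accumulated running terms $O(|t-t'|)$, and since each increment has variance $\le Lh$, the displacement has $L^2$-norm $\le C\sqrt{|t-t'|}$, which yields the $|t-t'|^{1/2}$ bound. Combining the last two estimates gives the uniform continuity required in Assumption \ref{ass:NumScheme}(iii).

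\textbf{Main obstacle.} The crux is the monotonicity step: one must exhibit a single pair $(K,\F_h)$ whose one-step increments simultaneously realise the $(t,\om)$-dependent stencil probabilities and satisfy all three moment bounds in \eqref{eq:CondH}. Ellipticity and the CFL condition are exactly what keep the probabilities nonnegative and the variance of order $h$, the scaling $\Delta x\sim\sqrt h$ is what pushes the third moment down to order $h^{3/2}$, and the zeroth-order term $1+h\nabla_y G$ has to be matched by $e^{\a h}$. The $\tfrac12$-Hölder-in-$t$ estimate is the other delicate point, since a naive step-by-step summation of the one-step increments diverges and one genuinely needs the probabilistic displacement bound coming from the controlled-chain interpretation.
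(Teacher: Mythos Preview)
Your proposal is correct and follows essentially the same route as the paper: consistency via standard Taylor/difference-quotient limits, monotonicity by rewriting $\T^{t,\om}_h[\f]-\T^{t,\om}_h[\psi]$ as a nonnegative convex combination over the three stencil points (positivity coming from CFL and $\nabla_\gamma G\ge\eps$), identifying the resulting one-step law with an admissible $\F_h$ satisfying the moment bounds \eqref{eq:CondH}, and then establishing stability through a backward Gronwall argument for the Lipschitz-in-$\om$ constant together with the controlled-chain/martingale representation and Doob's inequality for the $\tfrac12$-H\"older estimate in $t$.

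The only notable difference is that the paper, for simplicity, assumes $G$ is independent of $y$ in the monotonicity step (so that the stencil weights sum to $1$ and no $e^{\alpha h}$ factor is needed), whereas you keep the $y$-dependence and absorb the resulting factor $1+h\nabla_y G$ into the $e^{\alpha h}$ of condition \eqref{monocon2}; your direct verification of \eqref{monocon2} via $\alpha=0$ is clean. Where the paper parametrizes $K$ concretely by pairs $(a,b)\in[\eps,|\nabla_\gamma G|_0]\times[-L,L]$ and defines $\F_h(a,b,\cdot)$ as the generalized inverse of the distribution function of the associated trinomial increment, you describe $K$ more informally as ``the set of stencil-laws''; the content is the same, but for a final write-up you should make the parametrization explicit so that the measurability of $\F_h$ in $k$ required by Definition \ref{defn: KFU} is evident.
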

	
	\proof We will check each condition in Assumption \ref{ass:NumScheme}.
	For the simplicity of presentation, we assume that $G$ is independent of $y$. Clearly, the argument still works if $G$ is Lipschitz in $y$.

    \ms
	\noindent \rmi The consistency condition (Assumption \ref{ass:NumScheme} (i)) is obviously satisfied by \eqref{scheme:FD} as in the non-path-dependent case.
	
    \ms
	\noindent \rmii For the monotonicity in Assumption \ref{ass:NumScheme} (ii), let us consider two different bounded functions $\varphi$ and $\psi$.
	Denote $\phi := \varphi - \psi$, then by direct computation,
	\beaa
		\T^{t,\o}_h[\varphi] - \T^{t,\o}_h[\psi]
		&=&
		\phi(\om^0)
		~+~
		h \Big( G_y \Dc^{0,t}_h \phi + G_z \Dc^{1,t}_h \phi + G_{\gamma} \Dc^{2,t}_h \phi \Big),
	\eeaa
    where $G_y$, $G_z$ and $G_{\gamma}$  depend on $(t,\om)$ and $(\varphi, \psi)$, but are uniformly bounded by the Lipschitz constant $L$ of $G$.
    Let $b \in [-L, L]$ and $ \eps \le a \le |\nabla_{\gamma} G|_0$ be two constants, and $\zeta^{a,b}$ be a random variable defined on a probability space $(\tilde\O,\tilde\cF,\tilde P)$ such that
	\beaa
	& \tilde \P(\zeta^{a,b} = 0) ~=~ 1- b \frac{h}{\Delta x} - 2a \frac{h}{\Delta x^2},& \\
	& \tilde \P(\zeta^{a,b} = \Delta x) ~=~ b \frac{h}{\Delta x}  + a \frac{h}{\Delta x^2}\q \mbox{and}\q 
		\tilde \P(\zeta^{a,b} = - \Delta x) ~=~ a \frac{h}{\Delta x^2}.&
	\eeaa
	The law of $\zeta^{a,b}$ is well defined for $h$ small enough, because every term above is positive and the sum of all terms equals to $1$ under condition \eqref{eq:CFL}. Further, we have
	\be \label{eq:Zeta_DF}
		\tilde \E \big[ \zeta^{a,b} \big] = b h,
		~~ \mathrm{Var} \big[ \zeta^{a,b} \big] = a h,
		&\mbox{and}&
		\tilde \E \big[ | \zeta^{a,b}|^3 \big] ~\le~ |\Delta x|^3 ~\le~ C h^{3/2},
	\ee
	where the last terms follows by  $\D x\approx h^\frac12$.

	Then let $F_h(a,b, \cdot) : \R \to [0,1]$ be the distribution function of $\zeta^{a,b}$ and $\F_h(a,b,\cdot) : [0,1] \to \R$ be the generalized inverse function of $F_h({a,b}, \cdot)$, i.e.
\be \label{eq:Phi_h_DF}
	\F_h(a,b,x) &:=& \inf\{y: F_h(a,b,y)> x\}.
\ee
	In view of \eqref{eq:Zeta_DF}, we may verify \eqref{eq: monotoPPDE}, and  the monotonicity condition of Assumption \ref{ass:NumScheme} \rmii follows.
	
\ms
	\noindent \rmiii To prove Assumption \ref{ass:NumScheme} $\mathrm{(iii)}$, we will prove that there is a constant $C$ independent of $h$ such that
	\be \label{eq:uh_unif_conti}
		\big| u^h(t, \om) - u^h(t', \om') \big|
		&\le&
		C \Big (  \| \om_{t \wedge \cdot} - \om'_{t' \wedge \cdot} \| + \sqrt{|t' - t|} \Big),
		~~\forall (t,\om), (t', \om') \in \Theta.~~
	\ee
	Let us first prove that $u^h$ is Lipschtiz in $\om$.
	Denote
    \beaa
        L^h_t  &:=& \sup_{(t,\om), (t', \om') \in \Theta}
        \frac{u^h(t,\o)-u^h(t,\o')}{\|\o_{t\we\cd}-\o'_{t\we\cd}\|}
        1_{\{\|\o_{t\we\cd}-\o'_{t\we\cd}\|>0\}}.
    \eeaa
    By direct computation, we have
	\begin{multline}\label{eq: backward induction}
		u^h(t, \om) - u^h(t, \om') 
		=
		h G_{\om}\|\o_{t\we\cd} - \o'_{t\we\cd}\|
		+ \tilde D^G_h u^h_{t+h}(t,\o)-\tilde D^G_h u^h_{t+h}(t,\o'),
	\end{multline}
    where 
\beaa
\tilde D^G_h u^h_{t+h} ~:=~\big( (1 + h G_y) \Dc^0_h + h G_z \Dc^1_h + h G_{\gamma} \Dc^2_h \big) u^h_{t+h},
\eeaa    
with   $G_y$, $G_z$ and $G_{\gamma}$ uniformly bounded by $L$.
	Then there is a constant $C$ independent of $h$ such that
	\beaa
		L^h_t &\le& (1 + Ch) L^h_{t+h} + C h.
	\eeaa
	Notice that the terminal condition $\xi$ is Lipschitz,
    it follows by the discrete Gronwall inequality, we have $L^h_t \le C e^{C T}$ for a constant $C$ independent of $h$. Hence, there is a constant $C'$ independent of $h$ such that
	\be \label{eq:LipschitzL_t}
		\big| u^h(t, \om) - u^h(t, \om') \big|
		&\le&
		C' \|\om_{t \wedge \cdot} - \om'_{t\wedge \cdot}\|,
		~~\forall t \in [0,T], ~\om, \om' \in \Om.
	\ee

	\vspace{1mm}
	
	We next consider the regularity of $u^h$ in $t$. Let $t := i h $ and $t' := j h>t$.
	Note that
	\beaa
		u^h(t,\om) = u^h(t+h, \om_{t \wedge \cdot}) + h G(t, \om, 0,0,0) + h \big( G(t, \om, \Dc_h u^h_{t+h} (t,\om)) - G(t, \om, 0, 0, 0) \big).
	\eeaa
	By a direct computation, we have
	\be \label{eq:uh_Xh}
		u^h(t, \om)
		&=&
        \tilde \E \left[~ \sum_{k = i}^{j-1} G(t_k, \om\otimes_t \widehat X^h, 0, 0, 0 ) ~h
		~+~
		u^h(t', \o\otimes_t\widehat X^h) ~\right],
	\ee
	where $X^h$ is a discrete process defined as $X^h_0 := 0$,
	\beaa
		X^h_{t_{k+1}}
		&:=&
		X^h_{t_k}
		~+~
		\Phi_h( \nabla_{\gamma} G , \nabla_{z} G, U_{k+1}),
	\eeaa
    with $\Phi_h$ be given by \eqref{eq:Phi_h_DF},
    and $\widehat X^h$ is the linear interpolation of $X^h$.
    Define
\beaa
A^h_0:=0,\q A^h_{t_k} := \sum_{i=0}^{k-1} \tilde \dbE\big[\Phi_h( \nabla_{\gamma} G , \nabla_{z} G, U_{i+1})\big| \tilde\cF_i\big],\q
\mbox{and}\q M^h := X^h-A^h.
\eeaa
    Clearly, $M^h$ is a martingale and $A^h$ is a predictable process.
    Further, it follows from the property of $\F_h$ in \eqref{eq:Zeta_DF} that
	\beaa
		\tilde \E \Big| A^h_{t_{k+1}} - A^h_{t_k} \Big| \le L h
		&\mbox{and}&
		\mathrm{Var} \Big[  M^h_{t_{k+1}} - M^h_{t_k} \Big] \le L h.
	\eeaa 	
Then by \eqref{eq:uh_Xh}, we have
	\be \label{eq:MartEstim}
		| u(t, \om) - u(t', \om_{t \wedge \cdot}) |
		&\le&
		C (t' - t)
		~+~ C \tilde \E \Big[\sup_{i \le k \le j} \big| M_{t_k} \big| \Big].
	\ee
	Further, by Doob's inequality, it follows that
	\beaa
		\tilde \E \Big[\sup_{i \le k \le j} \big| M^h_{t_k} \big| \Big]
		~\le~
		\sqrt{\tilde \E \Big[\sup_{i \le k \le j} \big| M^h_{t_k} \big|^2 \Big]}
		~\le~
		2 \Big(\sqrt{\tilde \E \big[ (M^h_{t_j})^2 \big]} \Big)
		~\le~
		C \sqrt{t_j - t_i}.
	\eeaa
	Finally, combining the above estimation with \eqref{eq:LipschitzL_t} and \eqref{eq:MartEstim}, we obtain \eqref{eq:uh_unif_conti}.
	\qed

	\begin{rem}{\rm
		We here assume that the PPDE is non-degenerate ($\nabla_{\gamma}G \ge \eps > 0$).
		When $\nabla_{\gamma} G= 0$ and $\nabla_{z} G\ge 0$, the scheme is still monotone.
		When $\nabla_{\gamma} G= 0$ and $\nabla_{z} G\le 0$,
		it is possible to redefine the first order discrete derivative by
		\beaa
			\Dc^1_h \psi(t,\om) ~:=~ \frac{ \psi( \om_{t \wedge \cdot}) - \psi( \om\ox_t^h (-\Delta x))}{\Delta x}		
		\eeaa
		to obtain a monotone scheme.
		}
	\end{rem}

	\begin{rem}{\rm
		In the multidimensional case, $\nabla_{\gamma} G$ is a matrix.
		If $\nabla_{\gamma} G$ is diagonal dominated, then following Kushner and Dupuis \cite[Chapter 5.3]{KushnerDupuis}, it is easy to construct a monotone scheme under similar CFL condition \eqref{eq:CFL}.
		When $\nabla_{\gamma} G$ is not diagonal dominated, it is possible to use the generalized finite difference scheme proposed by Bonnans, Ottenwaelter and Zidani \cite{BonnansOttenwaelterZidani}.
		}
	\end{rem}

\subsection{The trinomial tree scheme of Guo-Zhang-Zhuo \cite{GuoZhangZhuo}}

	We consider the PPDE of the form \eqref{eq:PPDE}.
    Let $\sigma_0$ be some symmetric $d\x d$ matrix,
	denote
	$$
		F(t, \om, y, z, \gamma) ~:=~ G(t, \om, y, z, \gamma) - \frac{1}{2} \sigma_0^2 : \gamma,
		~~ \tilde G_{\gamma} ~:=~ \sigma^{-1}_0 G_{\gamma} \sigma_0^{-1}.
	$$
	Let $\zeta = (\zeta_1, \cdots, \zeta_d)$ a random vector defined on a probability space $(\tilde\O,\tilde \cF,\tilde \P)$ such that $\zeta_i, i = 1,\cdots, d$ are i.i.d and
	$$
		\tilde\P( \zeta_i = \frac{1}{\sqrt{p}} ) = \frac{p}{2},
		~~\tilde \P(\zeta_i = -\frac{1}{\sqrt{p}}) = \frac{p}{2},
		~~\tilde \P(\zeta_i = 0) = 1-p,
        ~~\mbox{with}~ p \in (0,1).
	$$
	For every $\cF_{t+h}$-measurable function $\psi: \Om \to \R$,
    let us define $\Dc^i_h \psi(t, \om) := \tilde \E\Big[ \psi\big( \om \ox_t^h (\sqrt{h} \sigma_0 \zeta) \big) K_i(\zeta) \Big]$ with
	\beaa
		K_0 := 1, ~~ K_1 := \frac{\sigma_0^{-1} \zeta}{\sqrt{h}},
		~~K_2 := \frac{\sigma_0^{-1} [ (1-p) \zeta \zeta^T - (1 - 3p) \mbox{Diag}[\zeta \zeta^T] - 2 p I_d ] \sigma_0^{-1}}{(1-p)h},
	\eeaa
    where for any matrix $\gamma = [\gamma_{i,j}]_{1\le i,j\le d} \in S_d$, $\mbox{Diag} [\gamma]$ denotes the diagonal matrix whose $(i,i)$-th component is $\gamma_{ii}$.
	Then the numerical scheme is defined as
	\be \label{eq:TrinomialTreeScheme}
		\T^{t, \om}_h [ u^h(t+h, \cdot)]
		&:=&
		\Dc^0_h u^h(t, \om)
		~+~
		h F \big( \cdot, \Dc_h u^h_{t+h} \big) (t, \om).
	\ee

	\begin{prop} \label{prop:trinomtree}
		Let Assumptions \ref{ass:PPDE} hold true and $G$ is Lipschitz in $\om$.
		Suppose in addition that Assumption 3.3 in Guo, Zhang and Zhuo \cite{GuoZhangZhuo} holds true
		(where we replace their notation $\tilde G_{\gamma}$ by $\nabla_{\gamma} \tilde G$ in our context).
		Then the trinomial tree scheme \eqref{eq:TrinomialTreeScheme} satisfies Assumption \ref{ass:NumScheme}.
	\end{prop}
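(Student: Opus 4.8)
The plan is to verify the three conditions of Assumption \ref{ass:NumScheme} one by one for the trinomial tree scheme \eqref{eq:TrinomialTreeScheme}, exploiting the explicit choice of the discrete derivatives $\Dc^i_h$ together with the moment conditions imposed on $\zeta$. The key observation is that the operators $\Dc^0_h, \Dc^1_h, \Dc^2_h$ are designed precisely so that, when applied to a smooth test function, they reproduce the value, the spatial gradient and the spatial Hessian up to the appropriate order in $h$. Concretely, one computes the moments of the vector $\sqrt{h}\,\sigma_0 \zeta$ and checks that $\tilde\E[\zeta_i]=0$, $\tilde\E[\zeta_i^2]=1$, while the weights $K_0,K_1,K_2$ act as discrete analogues of the identity, the first derivative and the second derivative. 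For the consistency condition \eqref{eq:newconsis}, I would insert a test function $\f\in C^{1,2}_0(\dbR^+\times\dbR^d)$ into \eqref{eq:TrinomialTreeScheme}, Taylor-expand $\f\big(\om\ox_t^h(\sqrt{h}\sigma_0\zeta)\big)$ around $(t,\om_t)$ to third order, and use the moment identities to see that $\Dc^0_h\f \to \f$, $\Dc^1_h\f \to \pa_x\f_0$, $\Dc^2_h\f \to \pa^2_{xx}\f_0$, so that the difference quotient converges to $-\pa_t\f_0 - G(t,\om,u,\pa_x\f_0,\pa^2_{xx}\f_0) = \Lc^{t,\om}\f_0$, using that $F = G - \tfrac12\sigma_0^2:\gamma$ exactly cancels the diffusion term carried by $\Dc^0_h$.

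The monotonicity condition is where the structural assumption on $\zeta$ and the extra hypothesis (Assumption 3.3 of \cite{GuoZhangZhuo}) enter. I would compute the increment $\T^{t,\o}_h[\varphi]-\T^{t,\o}_h[\psi]$ for $\phi := \varphi-\psi$ and organize the resulting expression as a weighted combination of the values of $\phi$ on the three branches $\{0,\pm\tfrac{1}{\sqrt p}\sqrt h\,\sigma_0\}$ in each coordinate. The goal is to show that the combined weight on each branch is nonnegative once $h$ is small; this is exactly the content of the CFL-type condition encoded in Assumption 3.3, now read through $\nabla_\gamma \tilde G = \sigma_0^{-1}\nabla_\gamma G\,\sigma_0^{-1}$. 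Having nonnegative weights, I would then define a controlled increment random variable whose law is prescribed by these weights, exactly as in the finite-difference proof, and identify its generalized-inverse representation $\F_h(k,U)$; the moment estimates \eqref{eq:CondH} follow from the choice $\sqrt h\,\sigma_0\zeta$ (mean $O(h)$, variance $O(h)$, third moment $O(h^{3/2})$). This yields the stronger form \eqref{eq: monotoPPDE}, which by the Remark implies \eqref{monocon2}.

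For the stability condition, I would establish the uniform Lipschitz regularity of $u^h$ in $\om$ and the $\tfrac12$-Hölder regularity in $t$ by essentially the same backward-induction/discrete-Gronwall argument used in the proof of Proposition \ref{Prop:SchemaDF}: writing $u^h(t,\om)-u^h(t,\om')$ as a discrete-martingale-plus-drift expression driven by the increments $\sqrt h\,\sigma_0\zeta$, one obtains a recursion $L^h_t \le (1+Ch)L^h_{t+h}+Ch$ for the Lipschitz modulus, and the time-regularity follows from Doob's inequality applied to the martingale part whose quadratic variation is controlled by the variance bound.

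I expect the main obstacle to be the monotonicity step: verifying that the weights attached to each branch of the tree are genuinely nonnegative is a delicate algebraic computation in the multidimensional case, since the off-diagonal structure of $\nabla_\gamma\tilde G$ couples the coordinates through the $\zeta_i\zeta_j$ terms in $K_2$. This is precisely why one must invoke Assumption 3.3 of \cite{GuoZhangZhuo}, which guarantees the sign condition; translating their hypothesis into the nonnegativity of the transition weights — and hence into a well-defined probability law admitting the representation $\F_h(k,U)$ required by Definition \ref{defn: KFU} — is the heart of the argument, and the rest is bookkeeping parallel to the finite-difference case.
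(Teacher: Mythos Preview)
Your proposal is correct and follows essentially the same route as the paper's own proof: consistency and monotonicity are obtained by the computations of \cite{GuoZhangZhuo} (Taylor expansion for consistency; Assumption~3.3 there to guarantee nonnegativity of the branch weights, which then yields the controlled-Markov-chain representation $\F_h(k,U)$ and hence \eqref{eq: monotoPPDE}), while stability is proved by repeating verbatim the backward-induction/discrete-Gronwall and Doob-inequality argument of Proposition~\ref{Prop:SchemaDF}. The paper's proof is in fact much terser than your sketch --- it simply cites \cite{GuoZhangZhuo} for (i)--(ii) and Proposition~\ref{Prop:SchemaDF} for (iii) --- but the content is the same.
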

	\proof The consistency and monotonicity condition in Assumption \ref{ass:NumScheme} \rmi and \rmii
	can be justified by almost the same argument as in \cite{GuoZhangZhuo}.
	Similarly to the finite difference scheme, the monotonicity in sense of Barles and Souganidis \cite{BarlesSouganidis} implies the interpretation of the controlled discrete processes of the numerical scheme,
	which implies the monotonicity condition \eqref{eq: monotoPPDE} in our context.
	Further, using the same argument as in Proposition \ref{Prop:SchemaDF},
	it is easy to show that
	\beaa
		\big| u^h(t, \om) - u^h(t', \om') \big|
		&\le&
		C \Big (  \| \om_{t \wedge \cdot} - \om'_{t' \wedge \cdot}\| + \sqrt{|t' - t|} \Big),
		~~\forall (t,\om), (t', \om') \in \Theta,
	\eeaa
	for some constant $C$ independent of $h$,
	which implies in particular \rmiii of Assumption \ref{ass:NumScheme}.
	\qed
	
	\begin{rem}{\rm
		As a PPDE degenerates to be a classical PDE, the conditions in Proposition \ref{prop:trinomtree}
		turns to be exactly the same conditions in Theorem 3.10 of \cite{GuoZhangZhuo}.
		}
	\end{rem}

\subsection{The probabilistic scheme of Fahim-Touzi-Warin \cite{FTW}}
\label{subsec:scheme_FTW}

	We consider PPDE \eqref{eq:PPDE} in which $G$ is in the form of
	\beaa
		G(t,\om, y, z, \gamma)
		&=&
		\mu(t, \om) \cdot z ~-~ \frac{1}{2} \sigma\sigma^T(t,\om) : \gamma
		~-~ F(t, \om, y, z, \gamma).
	\eeaa
	Before introducing the numerical scheme, we first define a random vector
	\beaa
		X^{(t, \om)}_h
		&:=&
		\mu(t, \om) h ~+~ \sigma(t, \om) W_h,
	\eeaa
	where $ W_h \sim N(0, h I_d)$ is a Gaussian vector.
	For every bounded function $\psi\in \dbL^0(\cF_{t+h})$, we define
	\beaa
		\Dc_h \psi (t, \om) &:=& \E \Big[ \psi(\om \ox_t \Xh^{(t, \om)}_{\cdot}) H_h(t, \om) \Big],
	\eeaa
	where $H_h(t,\om) = (H^h_0, H^h_1, H^h_2)^T$ with
	\beaa
		H^h_0 := 1,
		&
		H^h_1 := (\sigma^T(t,\om))^{-1} \frac{W_h}{h},
		&
		H^h_2 := (\sigma^T(t,\om))^{-1} \frac{W_h W_h^T - h I_d}{h^2} \sigma^{-1}(t,\om).
	\eeaa
	Then the probabilistic scheme is given by
	\be \label{eq:SchemaFTW}
		\T^{t,\o}_h [u^h(t+h,\cd)]
		~:=~
		\E \Big[ u^h(t+h, \widehat X^{(t,\om)}) \Big] + h F\big( \cdot, \Dc_h u^h_{t+h} \big) (t, \om).
	\ee

	\begin{rem}{\rm
		The probabilistic scheme in \cite{FTW} is inspired by the second order BSDE theory of
		Cheridito, Soner, Touzi and Victoir \cite{CheriditoSonerTouziVictoir}, and extends the classical numerical scheme of BSDE
		(see e.g. Bouchard and Touzi \cite{BouchardTouziBSDE}, Zhang \cite{ZhangBSDENum}). In practice, one can use the simulation-regression method to estimate the conditional expectation in the above scheme (see e.g. Gobet, Lemor and Warin \cite{GobetLemorWarin}).
		We refer to Guyon and Henry-Labord\`ere \cite{Guyon_HenryLabordere} for more details on the use of the scheme, to Tan \cite{TanSplitting} for an extension to a degenerate case, 	and to Tan \cite{TanCtrl} for an extension to path-dependent control problems.
		}
	\end{rem}

	\begin{assum} \label{eq:AssumptionF}
		\rmi The nonlinearity $F$ is Lipschtiz w.r.t. $(\om, u, z, \gamma)$ uniformly in $t$
		and $|F(\cdot, \cdot, 0, 0, 0) |_0 < \infty$.
		
		\noindent \rmii $F$ is elliptic and dominated by the diffusion term of $X$, that is,
		\be
			\nabla_{\gamma} F \le \sigma \sigma^T,
			&\mbox{on}& \Om \x \R \x \R^d \x \S_d.
		\ee
		
		\noindent \rmiii $\nabla_p F \in Image(\nabla_{\gamma}F)$
        and $\big|(\nabla_p F)^T (\nabla_{\gamma} F)^{-1} \nabla_p F \big|_0 < \infty$.
		
		\noindent \rmiv $|\mu|_1, |\sigma|_1 < \infty$ and $\sigma$ is invertible and $\xi$ is bounded Lipschitz.
	\end{assum}

	\begin{prop}
		Suppose that Assumption \ref{eq:AssumptionF} holds true.
		Then the probabilistic numerical scheme \eqref{eq:SchemaFTW} satisfies Assumption \ref{ass:NumScheme}.
	\end{prop}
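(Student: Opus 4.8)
The plan is to verify the three conditions of Assumption \ref{ass:NumScheme} for the scheme \eqref{eq:SchemaFTW}, following the same template as for the finite difference scheme in Proposition \ref{Prop:SchemaDF}: consistency by a Taylor expansion against the Gaussian increment, monotonicity by exhibiting a controlled-increment representation of the scheme in the sense of Definition \ref{defn: KFU}, and stability by uniform Lipschitz-in-$\om$ and $\frac12$-H\"older-in-$t$ estimates. For consistency (Assumption \ref{ass:NumScheme}(i)), I would fix $\f\in C^{1,2}_0$ and Taylor-expand $\f(t'+h,\cdot)$ along the Euler increment $X^{(t',\om')}_h=\mu h+\sigma W_h$. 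Since the derivatives of $\f$ are bounded, the Gaussian moments $\E[W_h]=0$ and $\E[W_hW_h^T]=hI_d$ give $\E[\f(t'+h,\widehat X)]=\f+h\big(\partial_t\f+\mu\cdot\partial_x\f+\tfrac12\sigma\sigma^T:\partial^2_{xx}\f\big)+o(h)$, while the Gaussian integration-by-parts (Stein) identity $\E[g(W_h)W_h]=h\,\E[\nabla g(W_h)]$ and its second-order analogue yield $\Dc^0_h\f\to\f$, $\Dc^1_h\f\to\partial_x\f$ and $\Dc^2_h\f\to\partial^2_{xx}\f$. Dividing the numerator of \eqref{eq:newconsis} by $h$ and letting $(t',\om',h,c)\to(t,0,0,0)$, the linear contribution of the first term of \eqref{eq:SchemaFTW} and the nonlinear term involving $F(\cdot,\Dc_h\f)$ identify the drift, diffusion and generator parts of $\Lc^{t,\om}\f_0$; the additive constant $c$ drops out because $H^h_0=1$ passes it through linearly while $\E[H^h_1]=\E[H^h_2]=0$, and the value entering the $y$-slot is controlled by the Lipschitz continuity of $F$ (Assumption \ref{eq:AssumptionF}(i)).

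For monotonicity I would reduce to the controlled representation demanded by \eqref{eq: monotoPPDE}, which by the Remark following Assumption \ref{ass:NumScheme} implies \eqref{monocon2}. Writing $\phi-\psi$ for the difference of two inputs and using the Lipschitz mean-value form of $F$, one obtains
\[
\T^{t,\om}_h[\phi]-\T^{t,\om}_h[\psi]
=\E\!\left[(\phi-\psi)(\om\ox_t\widehat X)\,\big(1+hF_y+hF_z\cdot H^h_1+hF_\gamma:H^h_2\big)\right],
\]
where $F_y,F_z,F_\gamma$ are intermediate (sub)gradients bounded by the Lipschitz constant, with $0\le F_\gamma$ and, crucially, $F_\gamma\le\sigma\sigma^T$ by the ellipticity/domination hypothesis \eqref{eq:AssumptionF}(ii). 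The heart of the argument is to show that under this domination, together with the structural condition \eqref{eq:AssumptionF}(iii) on $\nabla_pF$ and the smallness of $h$, the weight $w:=1+hF_y+hF_z\cdot H^h_1+hF_\gamma:H^h_2$ is nonnegative, so that $w$ times the Gaussian law of $X_h$ defines a genuine (sub-probability) transition kernel. I would then let $\F_h$ be the corresponding reweighted Euler increment, check that it satisfies \eqref{eq:CondH} -- mean $O(h)$, variance $O(h)$ and third absolute moment $O(h^{3/2})$, which hold because $X_h=\mu h+\sigma W_h$ has Gaussian tails integrable against the polynomial weight $w$ -- and identify the choice of $(F_y,F_z,F_\gamma)$ (depending on $(t,\om)$ and on the intermediate point) with a control $\nu\in\cK$. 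Absorbing the total mass $\E[w]=1+hF_y$ into the discount factor $e^{\alpha h}$ through $\inf_{0\le\alpha\le L}$ then produces exactly the right-hand side of \eqref{eq: monotoPPDE}.

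For stability I would reproduce the two estimates of Proposition \ref{Prop:SchemaDF}. Uniform boundedness of $u^h$ in $h$ follows from a discrete Gronwall argument using that $\xi$ is bounded and $|F(\cdot,0,0,0)|_0<\infty$. The Lipschitz bound in $\om$ is obtained by controlling the ratio $L^h_t$ along the backward recursion: using $|\mu|_1,|\sigma|_1<\infty$, the Lipschitz continuity of $F$ and of the terminal data, one gets an inequality of the form $L^h_t\le(1+Ch)L^h_{t+h}+Ch$, hence $L^h_t\le Ce^{CT}$ with $C$ independent of $h$. The $\frac12$-H\"older regularity in $t$ follows from a martingale representation analogous to \eqref{eq:uh_Xh} together with Doob's inequality applied to the martingale part of $\widehat X$, which gives the $O(\sqrt{t'-t})$ bound. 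Combining these yields the uniform continuity required in Assumption \ref{ass:NumScheme}(iii).

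I expect the monotonicity step to be the main obstacle. Unlike the three-point laws of the finite difference and trinomial tree schemes, the Fahim--Touzi--Warin weights $H^h_1,H^h_2$ are unbounded Hermite-type functionals of a Gaussian vector, so the effective transition weight $w$ is a quadratic form in $W_h$ whose nonnegativity is a genuinely delicate matter secured only by the domination $\nabla_\gamma F\le\sigma\sigma^T$ (and the structural condition \eqref{eq:AssumptionF}(iii)); establishing this nonnegativity and verifying the moment bounds \eqref{eq:CondH} for the reweighted increment, thereby fitting the scheme into the framework of Definition \ref{defn: KFU}, is the crux of the proof.
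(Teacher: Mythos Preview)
Your overall plan matches the paper's own proof, which is itself only a sketch citing Lemma~3.11 of \cite{FTW} for consistency, Section~3.2 and Lemma~3.1 of \cite{TanCtrl} for monotonicity, and the arguments of Proposition~\ref{Prop:SchemaDF} for stability. The consistency and stability parts of your proposal are correct and essentially identical to what is done there.

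The monotonicity step, however, contains a genuine gap. Your key claim is that the weight
\[
w \;=\; 1+hF_y+hF_z\!\cdot\! H^h_1+hF_\gamma\!:\!H^h_2
\;=\; 1+hF_y-\tr M + \sqrt{h}\,v^T\eta + \eta^T M\eta,
\qquad M:=\sigma^{-1}F_\gamma(\sigma^T)^{-1},~\eta:=W_h/\sqrt{h},
\]
is nonnegative for small $h$, so that $w$ times the Gaussian law of $X_h$ is a bona fide transition kernel that you can feed into Definition~\ref{defn: KFU}. This is false in general: at $\eta=0$ one has $w(0)=1+hF_y-\tr M$, and the domination $F_\gamma\le\sigma\sigma^T$ only yields $0\le M\le I$, hence $\tr M$ can be as large as $d$. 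In any dimension $d\ge 2$ the polynomial weight can therefore be strictly negative on a set of positive Gaussian measure, no matter how small $h$ is, and your ``reweighted increment'' does not define a probability law. The smallness of $h$ only controls the linear term $\sqrt{h}\,v^T\eta$ and the zeroth-order term $hF_y$; it does nothing to the $O(1)$ defect $1-\tr M$.

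The route that actually works, and that the paper outsources to \cite{TanCtrl}, does not try to make $w$ itself a density. Instead one observes that $p(\cdot)\,w$, with $p$ the density of $W_h$, equals $(1+hF_y)p - hF_z^T(\sigma^T)^{-1}\nabla p + hF_\gamma:(\sigma^T)^{-1}\nabla^2p\,\sigma^{-1}$, i.e.\ the second-order Taylor polynomial (in the parameters) of a Gaussian density with shifted drift and covariance. One then replaces $w\,p$ by the \emph{exact} Gaussian density $q$ with those shifted parameters; $q$ is automatically nonnegative, defines a legitimate $\Phi_h(k,\cdot)$ satisfying \eqref{eq:CondH}, and differs from $w\,p$ only by the Taylor remainder, which integrates against bounded test functions to an $O(h^{3/2})$ error. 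That remainder is precisely what the term $h\bar\d(h)$ in \eqref{eq: monotoPPDE} is designed to absorb. In short: the monotonicity is not exact at the level of the polynomial weight, and the correct construction of $\Phi_h$ goes through the Girsanov-type density rather than through $w$.
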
	
	\proof
		\rmi Assumption \ref{ass:NumScheme} \rmi is obviously satisfied in view of Lemma 3.11 of  \cite{FTW}.
		
		\noindent \rmii Further, using probabilistic interpretation of this scheme in Tan \cite[Section 3.2]{TanCtrl}, we may verify \rmii of Assumption \ref{ass:NumScheme}.
		See also the estimation given by Lemma 3.1 of \cite{TanCtrl}.

		\noindent \rmiii	
		For \rmiii of Assumption \ref{ass:NumScheme},
		we shall prove that the numerical solution $u^h$ is Lipschitz in $\om$ and $1/2$-H\"older in $t$.
		In \cite{FTW}, the authors proved  this property in the case of PDEs. Their arguments for the Lipschitz continuity in $\om$ can be easily adapted to this path-dependent case.
		For the regularity of $u^h$ in $t$, they used a regularization technique, which seems impossible to be adapted to the path-dependent case.
		However, we can still use similar arguments as in Proposition \ref{Prop:SchemaDF}, i.e. use the discrete-time controlled semimartingale interpretation, to prove the  H\"older property of $u^h$ in $t$.
	\qed

	\begin{rem}{\rm
	As a PPDE degenerates to be a PDE, the conditions in Assumption \ref{eq:AssumptionF} reduce exactly the same conditions as in \cite{FTW} (see their Theorem 3.6).
		}
	\end{rem}

\subsection{The semi-Lagrangian scheme}
	
	For the semi-Lagrangian scheme, we shall consider the PPDE \eqref{eq:PPDE} of the Bellman-Issac type, i.e.
	the function $G$ is in the form of
	\beaa
		~G(t,\om, y, z, \gamma)
		~=~
		\inf_{k_1 \in K_1} \sup_{k_2 \in K_2}
		\Big(
			\frac{1}{2} a^{k_1, k_2}(\cdot) : \gamma
			+ b^{k_1, k_2}(\cdot) \cdot z
			+ c^{k_1, k_2}(\cdot) y
			+ f^{k_1, k_2}(\cdot)
		\Big)(t,\om),
	\eeaa
	where $K_1$ and $K_2$ are some sets, $(a^{k_1, k_2}, b^{k_1, k_2}, c^{k_1,k_2}, f^{k_1, k_2})$ are functionals defined on $\Theta$.

	Let $\zeta$ be a random vector satisfying
	\be \label{eq:semiLag_xi}
		\E \big[ \zeta \big] = 0,
		~~~
		\mbox{Var} \big[ \zeta \big] = I_d
		&\mbox{and}&
		\E \big[ \big| \zeta \big|^3 \big] < \infty.
	\ee
	Then the semi-Lagrangian scheme is defined as
	\be \label{scheme:semiLagrangian}
		\T^{t,\o}_h[ u^h(t+h,\cd)]
		&:=&
		\inf_{k_1 \in K_1} \sup_{k_2 \in K_2}
		\Big \{
			u^h \big(t+h, \om \otimes_t \big(\si^{k_1,k_2}(t,\om) \zeta \sqrt{h} + b^{k_1, k_2}(t,\om) h  \big)\big) \nonumber \\
		&&~~~~~~~~~~~~~+
			u^h \big(t+h, \om) c^{k_1, k_2}(t,\om) h
			~+~
			f^{k_1, k_2}(t,\om) h
		\Big \}.
	\ee

	\begin{prop}
		Suppose that $|a|_1 + |b|_1 + |c|_1 + |f|_1 < \infty$,
		and \eqref{eq:semiLag_xi} holds true.
		Then the semi-Lagrangian  scheme \eqref{scheme:semiLagrangian} for the Bellman-Issac path-dependent equation satisfies
		Assumption \ref{ass:NumScheme}.
	\end{prop}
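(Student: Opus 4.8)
The plan is to verify the three conditions of Assumption~\ref{ass:NumScheme} for the semi-Lagrangian scheme \eqref{scheme:semiLagrangian}, following the same strategy used for the finite difference scheme in Proposition~\ref{Prop:SchemaDF} and the trinomial tree scheme in Proposition~\ref{prop:trinomtree}. The consistency condition (i) is the most routine: plugging a test function $\f\in C^{1,2}_0(\dbR^+\times\dbR^d)$ into the scheme and Taylor-expanding $u^h(t+h,\cdot)=\f(t+h,\cdot)$ around $(t,\om_t)$ to second order in the spatial increment $\si^{k_1,k_2}(t,\om)\z\sqrt h+b^{k_1,k_2}(t,\om)h$ and first order in $h$, then taking $\tilde\E$ and using the moment conditions \eqref{eq:semiLag_xi} (namely $\E[\z]=0$, $\mathrm{Var}[\z]=I_d$, and $\E[|\z|^3]<\infty$ to control the remainder), one recovers exactly the operator $\cL^{t,\om}\f_0=-\pa_t\f_0-G(t,\om,u(t,\om),\pa_x\f_0,\pa^2_{xx}\f_0)$ in the limit $(t',\om',h,c)\to(t,0,0,0)$. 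The addition of the constant $c$ is harmless since $\inf\sup$ of $(\,\cdot\,+c)$ simply shifts by $c+c\,h\sum c^{k_1,k_2}$-type terms that vanish in the limit.

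For the monotonicity condition (ii), I would first observe that the map $\psi\mapsto\T^{t,\om}_h[\psi]$ is built from the pointwise operations $\inf_{k_1}\sup_{k_2}$ applied to an affine combination of evaluations of $\psi$ with nonnegative coefficients: the term $u^h(t+h,\om\otimes_t(\si^{k_1,k_2}\z\sqrt h+b^{k_1,k_2}h))$ is an expectation (hence an average with a genuine probability weight) over $\z$, and the zeroth-order term has coefficient $c^{k_1,k_2}h$. The key point is to exhibit the controlled discrete process underlying the scheme: for each fixed strategy $(k_1,k_2)$ the spatial increment $\F_h(k,U):=\si^{k_1,k_2}\z\sqrt h+b^{k_1,k_2}h$ (with $\z$ realized through the uniform $U$ via its quantile transform as in \eqref{eq:Phi_h_DF}) satisfies the moment bounds \eqref{eq:CondH}, since $|\tilde\E[\F_h(k,U)]|=|b^{k_1,k_2}|h\le Lh$, $\mathrm{Var}[\F_h(k,U)]=|\si^{k_1,k_2}|^2h\le Lh$, and $\tilde\E[|\F_h(k,U)|^3]\le Ch^{3/2}$ using $\E[|\z|^3]<\infty$ together with the boundedness implied by $|\si|_1,|b|_1<\infty$. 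This identifies a nonlinear expectation $\ul\cE_h$ as in Definition~\ref{defn: KFU}, and the presence of the linear zeroth-order coefficient $c^{k_1,k_2}$ is absorbed by the $e^{\a h}$ factor in \eqref{monocon2}, exactly as the $G_y$ term was handled in Proposition~\ref{Prop:SchemaDF}; this yields the stronger form \eqref{eq: monotoPPDE} and hence \eqref{monocon2}.

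The stability condition (iii) requires proving the uniform estimate
\begin{equation*}
	\big| u^h(t, \om) - u^h(t', \om') \big|
	\le
	C \Big(  \| \om_{t \wedge \cdot} - \om'_{t' \wedge \cdot}\| + \sqrt{|t' - t|} \Big),
	\qquad \forall (t,\om), (t', \om') \in \Theta,
\end{equation*}
with $C$ independent of $h$. I would split this into Lipschitz regularity in $\om$ and $1/2$-H\"older regularity in $t$, mirroring Proposition~\ref{Prop:SchemaDF}. For the spatial Lipschitz bound, one sets $L^h_t$ as the running Lipschitz constant, writes the one-step difference $u^h(t,\om)-u^h(t,\om')$ using that $\inf\sup$ is $1$-Lipschitz with respect to its arguments, and derives a discrete Gronwall inequality $L^h_t\le(1+Ch)L^h_{t+h}+Ch$ from the Lipschitz constants $|a|_1,|b|_1,|c|_1,|f|_1$; since $\xi$ is Lipschitz this gives a uniform bound. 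For the time regularity, I would unfold the scheme over $k=i,\dots,j-1$ steps to express $u^h(t,\om)$ as $\tilde\E[\sum_k f\,h+u^h(t',\om\otimes_t\widehat X^h)]$ along the controlled interpolated process $\widehat X^h$, decompose $X^h=A^h+M^h$ into a predictable finite-variation part and a martingale, bound the drift part by $C(t'-t)$ and apply Doob's inequality to get $\tilde\E[\sup_k|M^h_{t_k}|]\le C\sqrt{t_j-t_i}$. The main obstacle I anticipate is handling the $\inf_{k_1}\sup_{k_2}$ structure in this unfolding step: unlike the single-control case, the controls are chosen adaptively and one must verify that the semimartingale decomposition and the Doob estimate survive the game structure. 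This is resolved exactly as in the discrete-time controlled-process interpretation of Proposition~\ref{Prop:SchemaDF}, by freezing a near-optimal strategy for the outer $\inf\sup$ and noting the moment bounds \eqref{eq:CondH} hold uniformly in $(k_1,k_2)$, so the martingale and drift estimates are uniform over all admissible controls.
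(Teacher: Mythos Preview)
Your proposal is correct and follows essentially the same approach as the paper: consistency by Taylor expansion and the moment conditions \eqref{eq:semiLag_xi}, monotonicity via the quantile-transform construction of $\Phi_h$ with combined control set $K=K_1\times K_2$, and stability by the Lipschitz/H\"older argument of Proposition~\ref{Prop:SchemaDF}. The paper makes one step slightly more explicit than you do, recording the elementary inequality $\inf_{k_1}\sup_{k_2}\psi-\inf_{k_1}\sup_{k_2}\varphi\le\sup_{k_1,k_2}(\psi-\varphi)$ as the device that collapses the game structure into a single $\sup$ over $K_1\times K_2$, which is exactly what justifies your passage from the $\inf\sup$ scheme to the nonlinear expectation $\ul\cE_h$.
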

	\proof \rmi The consistency condition (Assumption \ref{ass:NumScheme} (i)) is easy to check.
	
	\noindent \rmii Let $E$ be a set, $e: K_1 \x K_2 \to E$ be an arbitrary mapping, and
	$\psi, \varphi : E \to \R$ be two bounded functions. Note that
	\be\label{eq:Ineq_SL}
		\inf_{k_1 \in K_1} \sup_{k_2 \in K_2} \psi( e(k_1,k_2) )-
		\inf_{k_1 \in K_1} \sup_{k_2 \in K_2} \varphi( e(k_1,k_2) )
		\le
		\sup_{k_1 \in K_1, k_2 \in K_2} (\psi-\varphi)( e(k_1,k_2) ).
	\ee
    Notice that $\R^d$ is isomorphic to $\R$, we can always consider the random vector $\sigma^{k_1,k_2}\zeta \sqrt{h} + b^{k_1, k_2} h $ as a one-dimensional random variable.
    By consider the inverse function of its distribution function, then there is a family
    $\Phi_h(k_1, k_2, \cdot)$ such that $\Phi^h(k_1, k_2, U) \sim \sigma^{k_1,k_2}\zeta \sqrt{h} + b^{k_1, k_2} h $ in law with $U \sim \Uc([0,1])$, for all $(k_1, k_2) \in K_1 \x K_2$.
	Then it follows from \eqref{eq:Ineq_SL} that the monotonicity condition in Assumption \ref{ass:NumScheme} \rmii holds true with $\Phi_h(k_1, k_2, \cdot)$ and $K = K_1 \x K_2$.
	
	\noindent \rmiii Finally, by the same arguments as in Proposition \ref{Prop:SchemaDF}, we can easily deduce that
	$u^h$ is Lipschitz in $\om$ and $1/2$-H\"older in $t$, uniformly on $h$,
	and hence complete the proof for the stability condition in Assumption \ref{ass:NumScheme}.
	\qed

	\begin{rem}{\rm
		Solutions of path dependent Bellman-Issac equations can characterize value functions of stochastic differential games (see e.g. Pham and Zhang \cite{PhamZhang}).
}	
	\end{rem}

	\begin{rem}{\rm
		\rmi For Bellman-Issac PDE, Debrabant and Jakobsen \cite{Debrabant_Jakobsen} studied the semi-Lagrangian scheme with a random variable $\zeta$ following a discrete distribution, together with an interpolation technique for the implementation.

\ms
		\noindent \rmii
		For Bellman equation (PDE), Kharroubi, Langren\'e and Pham \cite{KharroubiLangrenePham}
		propose a semi-Lagrangian type numerical scheme with $\zeta \sim N(0,1)$,
		and provide a simulation-regression technique for the implementation.
		It is worth of mentioning that \cite{KharroubiLangrenePham} provides a convergence rate for the scheme,
		while we only prove in this paper a general convergence theorem as in Barles and Souganidis \cite{BarlesSouganidis}.
		}
	\end{rem}

\section{Numerical examples}

	We will provide two toy examples of numerical implementation in low-dimensional case.
	Notice also that the main focus of the paper is on a general convergence theorem for numerical analysis of non-Markovian control problems, 
	and we will not propose any new numerical schemes.
	For more numerical examples (in high-dimensional case) of difference numerical schemes, we would like to refer to \cite{FTW, GuoZhangZhuo, Guyon_HenryLabordere, KharroubiLangrenePham, TanSplitting}, etc.

	In our two toy examples, we implement the finite difference scheme in Section \ref{subsec:schemeFD} and the probabilistic scheme in Section \ref{subsec:scheme_FTW}.
	In this low dimensional case, the finite difference scheme is quite easy to be implemented given some boundary conditions.	For the probabilistic scheme, we do not need the boundary conditions,
	but need a simulation-regression technique (as studied in Gobet, Lemor and Warin \cite{GobetLemorWarin}) to estimate the conditional expectations appearing in the scheme.
	More concretely, we will use the local polynomial functions as regression basis, as introduced by Bouchard and Warin \cite{BouchardWarin}.

\paragraph{A first numerical example}

	For a first numerical example, we consider the PPDE
	\be \label{eq:PPDE_Exam1}
		-\pa_t u - \min_{\mu \in [\ul \mu,\ol \mu]} \mu \pa_\o u -\max_{a\in [\ul a,\ol a]}\frac{a}{2}\pa^2_{\o\o}u = f(t,\o,\bar\o), \q u(T,\o) = g(\o_T,\bar \o_T).
	\ee
	where $d = 1$, $\bar\o_t := \int_0^t \o_s ds$, $f:[0,T] \x \R \x \R \to \R$ and $g: \R \x \R \to \R$ are two functions.
 In this case, the value function dependents only on $(t, \om_t, \omb_t)$, so that the numerical solution can be written as $u^h(t, \om_t, \omb_t)$.

	The above PPDE \eqref{eq:PPDE_Exam1} is motivated by a stochastic differential game:
	\beaa
		u_0 = \inf_{\ul\mu\le \mu_t\le \ol\mu}\sup_{\ul a\le a_t\le \ol a}\dbE\Big[\int_0^T f(t,X^{\mu,a}_t, \ol X^{\mu, a})dt +g(X^{\mu,a}_T,\ol X^{\mu,a}_T)\Big],
	\eeaa
	where $X^{\mu,\si}$ is controlled diffusion such that
	\beaa
		X_t^{\mu,a} = \int_0^t \mu_s ds + \int_0^t \sqrt{a_s}dW_s,
		\q
		\mbox{with $W$ a Brownian motion},
	\eeaa
	and $\ol X_t^{\mu,a} = \int_0^t X_s^{\mu,a} ds$ (see e.g. Pham and Zhang \cite{PhamZhang} for more details).

	We choose the terminal condition $g(x,y)=\cos (x+y)$ and the function
	\beaa
		f(t,x,y) 
		&=&
		-~ (x -\ul \mu) \big(\sin (x-y)\big)^- 
		~+~ 
		(x+\ol\mu) \big(\sin (x-y)\big)^+ \\
		&&
		+~ 
		\frac{\ul a}{2}\big(\cos(x-y)\big)^+ 
		~-~
		 \frac{\ol a}{2}\big(\cos(x-y)\big)^-,
	\eeaa
	so that the solution to PPDE \eqref{eq:PPDE_Exam1} is given explicitly by $u(t,\o)= \cos (\o_t+\bar\o_t)$,
	which serves as a reference value for the numerical examples (this idea is borrowed from Guo, Zhang and Zhuo \cite{GuoZhangZhuo}).
	 For finite difference scheme, we use $\cos(\o_t + \bar \o_t)$ as boundary conditions since it is the exact solution of the PPDE.
	For the probabilistic scheme, we simulate $100~000$ paths $(X^k_{\cdot})_{k = 1, \cdots, 100~000}$ of a diffusion process on discrete time grids.
	The diffusion process $X$ is defined by $dX_t = \sqrt{\al} dW_t$, with a Brownian motion  $W$, and also denote $A_t := \int_0^t X_s ds$.
	Then for regression function basis, we use the local polynomial of order 2  (as introduced in \cite{BouchardWarin}), i.e.
	$1, \om_t, \omb_t, \om_t^2, \omb_t^2, \om_t \omb_t$ on each local hypercube.
	To define the local hypercube, we will find the minimum and maximum of $X_t$ as well as $A_t$ of all simulations, and then divide uniformly the domain $[\min_k X^k_t, \max_k X^k_t] \x [\min_k A^k_t, \max_k A^k_t]$ into $20 \x 28$ small hypercubes.

	\begin{figure}[htb!]
		\centering
		\includegraphics[scale=0.9]{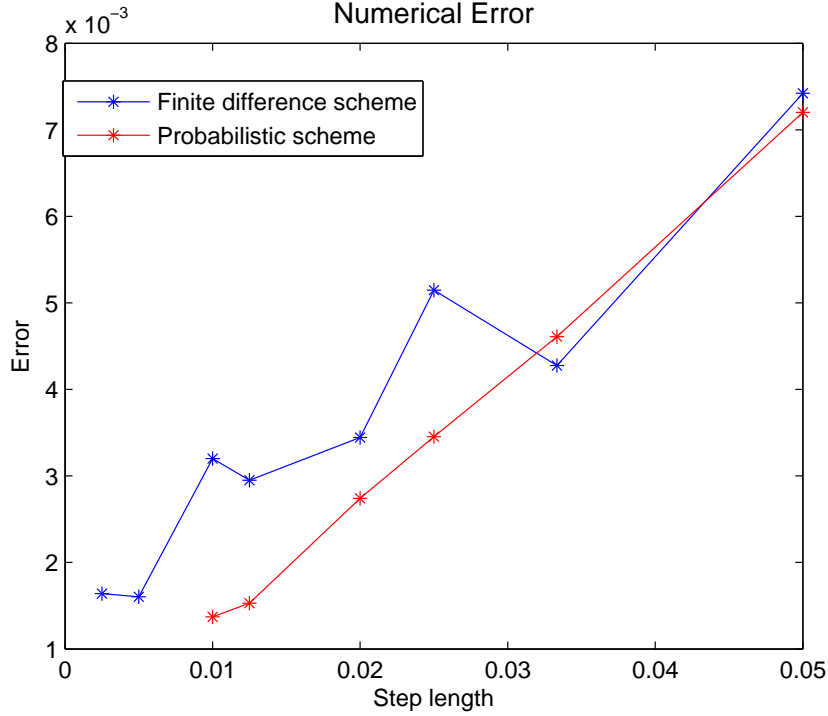}
		\caption{For PPDE \eqref{eq:PPDE_Exam1}, we choose $\ul \mu=-0.2$, $\ol\mu =0.2$, $\ul a=0.04$, $\ol a=0.09$, $T=1$ and $\om_0 = \omb_0 = 0$. Then the reference solution is given by $u(0, 0) = \cos(0) = 1$.
		We compute the error between the reference solution and the numerical solutions, w.r.t. difference time step length $\Delta t$.
		}
		\label{fig:PPDE1}
	\end{figure}

\paragraph{A second numerical example}
	
	The second example of PPDE we considered is given by
	\be \label{eq:PPDE_Exam2}
		&-\pa_t u -\max_{\ul a\le a\le \ol a}\Big(\frac12 a\pa^2_{\o\o} u-f(t,u,\pa_\o u,a)\Big) ~=~ 0,&\\
		&\mbox{where}~~f(t,y,z,a)=\frac12\big((\sqrt{a}z+ b/ \sqrt{a})^- \big)^2-z b-b^2/2a,&  \nonumber
	\ee
	which is taken from Matoussi, Possamaï and Zhou \cite{MPZ}.
	The above equation is motivated by solving a robust utility maximization problem using 2BSDE,
	which can be instead characterized by a PPDE (see e.g. \eqref{eq:PPDE_2BSDE}).

	We consider the terminal condition 
	\beaa
		u(T,\o)=K_1+(\bar\o_T - K_1)^+ -(\bar\o_T -K_2)^+,
		~~~\omb_T := \int_0^T \om_s ds.
	\eeaa
	Then the solution to PPDE \eqref{eq:PPDE_Exam2} can also be characterized by the PDE, by adding an associated variable $y$,
	\be \label{eq:PDE_exam2}
		&-\pa_t v - x \pa_y v -\max_{\ul a\le a\le \ol a}\Big(\frac12 a\pa^2_{xx} v-f(t,v,\pa_x v,a)\Big) =0,&\\
		& v(T,x,y) = K_1+(y - K_1)^+ -(y-K_2)^+.& \nonumber
	\ee

	We implemented the finite difference scheme (Section \ref{subsec:schemeFD}) and the probabilistic scheme (Section \ref{subsec:scheme_FTW}) for PPDE \eqref{eq:PPDE_Exam2}.
	As reference, we implemented the classical finite difference scheme of PDE \eqref{eq:PDE_exam2}.
	Here for the finite difference scheme, we fixe the computation domain as $[-0.8, 0.8] \x [-0.8, 0.8]$, and use a fictive Neumann boundary conditions $\partial_x u(t,x,a) = 0$ for $x = \pm 0.8$ and a fictive Dirichlet boundary conditions $u(t,x, -0.8) = K_1$ and $u(t,x, 0.8) = K_2$.
	For the probabilistic scheme, we use the same simulation-regression techniques as previous PPDE to estimate the conditional expectations appearing in the scheme.
	We also notice that the generator in PPDE \eqref{eq:PPDE_Exam2} is in fact not Lipschitz but quadratic in $z$, 
	however, the convergence of the numerical solutions can be still observed, see Figure \ref{fig:PPDE2}.

	\begin{figure}[htb!]
		\centering
	\includegraphics[scale=0.9]{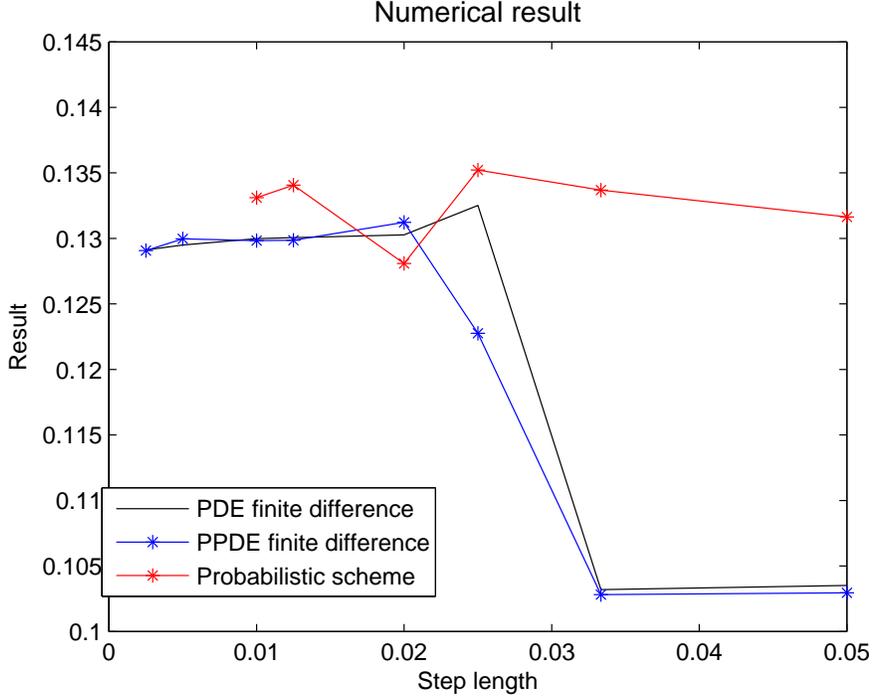}
		\caption{For PPDE \eqref{eq:PPDE_Exam1}, we choose $K_1=-0.2$, $K_2 = 0.2$, $\ul a=0.04$, $\ol a = 0.09$, $b=0.05$ and $T=1$.
		We provide all the numerical solutions w.r.t. difference time step length $\Delta t$.
		It seems that the faire value is closed to 0.129.
		For finite-difference scheme, when $\Delta t$ is greater than 0.025, we need to use a coarser space-discretization to ensure the monotonicity (similar to the classical CFL condition), which makes a big difference to the numerical solutions for the case $\Delta t < 0.25$.
		However, the convergence as $\Delta t \to 0$ is still obvious.
		}
		\label{fig:PPDE2}
	\end{figure}

\section{Proofs}\label{sec:proofs}

\subsection{Preliminary results}

In preparation of the proof of Theorem \ref{thm:main}, we prove the following lemmas.

\begin{lem}[Fatou's Lemma]\label{lem: Fatou}
Assume that the random variables $X^n\in C^0(\cF)$ are uniformly bounded.  Then we have
\beaa
\liminf_{n\rightarrow\infty} \ul\cE [X^n] \ge \ul\cE \big[\liminf_{n\rightarrow\infty} X^n\big]
\eeaa
\end{lem}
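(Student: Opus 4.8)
The plan is to reduce the statement to a monotone-convergence property of $\ul\cE$ and then to exploit the weak compactness of $\cP$. First I would set $Z_n:=\inf_{m\ge n}X^m$; these are bounded, nondecreasing in $n$, and $\sup_n Z_n=\liminf_n X^n=:Y$. Since $\ul\cE$ inherits monotonicity from each $\E^\P$, the inequality $X^m\ge Z_n$ for $m\ge n$ gives $\ul\cE[X^m]\ge\ul\cE[Z_n]$, hence $\liminf_n\ul\cE[X^n]\ge\sup_n\ul\cE[Z_n]$. Because $Z_n\le Y$ also yields $\sup_n\ul\cE[Z_n]\le\ul\cE[Y]$ by monotonicity, everything comes down to the reverse inequality $\sup_n\ul\cE[Z_n]\ge\ul\cE[Y]$, i.e. to the continuity from below $\ul\cE[Z_n]\uparrow\ul\cE[Y]$.

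Using the classical monotone convergence theorem under each fixed $\P$, so that $\sup_n\E^\P[Z_n]=\E^\P[Y]$, the remaining claim reads $\inf_{\P\in\cP}\sup_n\E^\P[Z_n]\le\sup_n\inf_{\P\in\cP}\E^\P[Z_n]$; that is, the infimum over $\cP$ must be interchanged with the increasing limit in $n$. The minimax inequality gives the opposite direction for free, so the content is a genuine commutation statement. This is precisely where, in the Barles--Souganidis setting, one uses local compactness of the state space; since $\O$ is not locally compact I would replace that input by the weak compactness of $\cP$, which holds because the drift and diffusion densities are bounded by $L$ (uniform bounds on the characteristics give tightness, and a weak limit of such semimartingale measures is again in $\cP$). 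To stay within continuous integrands, where $\P\mapsto\E^\P[\cdot]$ is weakly continuous and the infimum defining $\ul\cE$ is attained, I would approximate each (upper semicontinuous) $Z_n$ from above by the continuous functions $Z_n^{(N)}:=\min_{n\le m\le N}X^m\downarrow Z_n$, noting that $\ul\cE[Z_n]=\inf_N\ul\cE[Z_n^{(N)}]$ by the elementary downward continuity of $\ul\cE$.

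With $c:=\sup_n\ul\cE[Z_n]$, the goal is then to produce a single $\P_*\in\cP$ with $\E^{\P_*}[Y]\le c$, which gives $\ul\cE[Y]\le c$ and closes the argument. I would extract, by weak compactness, a limit point $\P_*$ of (near-)minimizers of $\P\mapsto\E^\P[Z_n]$ and pass to the limit in the bounds $\E^{\P_n}[Z_n]\le c$, using the nested structure $Z_n\ge Z_j$ for $n\ge j$ together with monotone convergence under $\P_*$. The main obstacle is exactly this interchange of $\inf_{\P\in\cP}$ with the limit in $n$: beyond the loss of local compactness, the $Z_n$ are only upper semicontinuous, so the portmanteau estimates available under weak convergence control $\P\mapsto\E^\P[Z_n]$ from the wrong side, and the passage must be organized through the continuous approximations $Z_n^{(N)}$ while keeping track of which semicontinuity direction is needed. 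Verifying that the relevant weak limits remain in $\cP$, and that these semicontinuity directions line up, is the delicate part I expect to occupy most of the work.
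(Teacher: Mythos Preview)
Your reduction to monotone convergence via $Z_n=\inf_{m\ge n}X^m$ is exactly the paper's first step. From there, however, the paper does not argue further: it simply invokes Theorem~31 of Denis--Hu--Peng \cite{DenisHuPeng}, which delivers $\ul\cE[Z_n]\uparrow\ul\cE[Y]$ from the weak compactness of $\cP$. Your plan --- extract a weak limit of near-minimizers and pass to the limit through the continuous approximants $Z_n^{(N)}$ --- is in effect a proposal to reprove that cited result, and the ingredients you isolate (weak compactness of $\cP$, attainment of the infimum on continuous integrands, downward continuity of $\ul\cE$) are the right ones. The semicontinuity obstruction you flag is genuine: for upper semicontinuous integrands the portmanteau bound goes the wrong way. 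A cleaner route, which also underlies the cited theorem, is a Dini argument on $\cP$: when the increasing sequence consists of \emph{continuous} bounded functions, the open sets $\{\dbP\in\cP:\dbE^\dbP[\,\cdot\,]>c\}$ cover the weakly compact $\cP$, so by monotonicity a single index suffices and the monotone limit follows at once. In the paper's only application of this lemma the $X^n$ are uniformly equicontinuous, hence the $Z_n$ are themselves continuous and the Dini argument applies directly, making your $Z_n^{(N)}$ approximation unnecessary.
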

\begin{proof}
In order to prove Fatou's lemma, it is enough to show the monotone convergence theorem, i.e. given a sequence $\{X^n:n\in\dbN\}$ of increasing random variables,  we have
    \be \label{eq:MonoCvg}
        \lim_{n\rightarrow\infty} \ul\cE[X^n]=\ul\cE[\lim_{n\rightarrow\infty} X^n].
    \ee
    Since $X^n\in C^0(\cF)$ for each $n$, it follows from Theorem 31 in \cite{DenisHuPeng} that \eqref{eq:MonoCvg} holds true.
\end{proof}

Recall the nonlinear expectation $\ul\cE_h$ defined in \eqref{defn: cEh}.
	\begin{lem}\label{lem: eqv convergence}
		Let $\f:\O\rightarrow\dbR$ be bounded uniformly continuous.
		Then there exists a modulus of continuity $\rho : \R_+ \to \R_+$ which depends only on the moduli of continuity of $\f$ and $|\f|_0$,
		such that
		\beaa
			\ul\cE[\f] &\le& \ul\cE_h[\f] + \rho(h).
		\eeaa
	\end{lem}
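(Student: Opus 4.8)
The plan is to prove this one-sided comparison by showing that every admissible discrete control $\nu\in\cK$ can be matched, up to an error that is uniform in $\nu$, by a genuine continuous semimartingale measure in $\cP$. Since $\ul\cE[\f]=\inf_{\dbP\in\cP}\dbE^\dbP[\f]$ does not depend on $\nu$, it suffices to construct, for each fixed $\nu$, a measure $\dbP^\nu\in\cP$ with $\dbE^{\dbP^\nu}[\f]\le \tilde\dbE[\f(\Xh^{h,\nu})]+\rho(h)$; taking the infimum over $\nu$ on the right-hand side then gives $\ul\cE[\f]\le\ul\cE_h[\f]+\rho(h)$. I would realise $\dbP^\nu$ as the law of a continuous process $Y$, built on an enlargement of $(\tilde\O,\tilde\cF,\tilde\dbP)$, that stays sup-norm close to the interpolated random walk $\Xh^{h,\nu}$, so that the error is absorbed by the modulus of continuity of $\f$ together with $|\f|_0$.

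First I would split each increment into its conditional mean and its centred part: writing $m_i:=\tilde\dbE[\F_h(\nu_{ih},U_i)\mid\tilde\cF_{i-1}]$ and $\xi_i:=\F_h(\nu_{ih},U_i)-m_i$, conditions \eqref{eq:CondH} give $|m_i|\le Lh$, $v_i:=\tilde\dbE[\xi_i^2\mid\tilde\cF_{i-1}]\le Lh$ and $\tilde\dbE[|\xi_i|^3\mid\tilde\cF_{i-1}]\le CLh^{3/2}$. The mean part contributes a finite-variation drift $D$ of rate bounded by $L$, hence admissible for $\cP$. For the centred martingale part I would perform a conditional, adaptive Skorokhod embedding of the walk $\sum_j\xi_j$ into a single Brownian motion $\beta$: there are stopping times $\tau_1<\tau_2<\cdots$ with $\beta_{\tau_i}=\sum_{j\le i}\xi_j$ and $\tilde\dbE[\tau_i-\tau_{i-1}\mid\tilde\cF_{i-1}]=v_i\le Lh$. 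Then $Y$ is obtained by adding $D$ to the continuous martingale $t\mapsto\beta_{Lt}$, run at the maximal diffusion rate $L$; by construction $\|\mu^{\dbP^\nu}\|_\infty\le L$ and $\|a^{\dbP^\nu}\|_\infty\le L$, so $\dbP^\nu:=\tilde\dbP\circ Y^{-1}\in\cP$, and the martingale part of $Y$ visits the exact values of the centred walk at the random times $\tau_i/L$ (close to, but not equal to, the grid points).

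The crux is then to control the discrepancy between this embedding clock and the deterministic grid. Setting $\delta_j:=(\tau_j-\tau_{j-1})-v_j$, the $\delta_j$ are martingale differences, and the single-step embedding together with the Burkholder--Davis--Gundy inequality yields $\tilde\dbE[(\tau_j-\tau_{j-1})^{3/2}\mid\tilde\cF_{j-1}]\lesssim Lh^{3/2}$, hence $\tilde\dbE[|\delta_j|^{3/2}]\lesssim h^{3/2}$. Applying a maximal inequality at exponent $3/2$ and the subadditivity of $x\mapsto x^{3/4}$ over the $T/h$ steps, I expect $\tilde\dbE\big[\sup_i|\sum_{j\le i}\delta_j|\big]\lesssim (T h^{1/2})^{2/3}=T^{2/3}h^{1/3}\to0$; combined with the modulus of continuity of Brownian motion over short time windows, this bounds $\tilde\dbE[\,\|Y-\Xh^{h,\nu}\|\,]$ by a quantity that tends to $0$ with $h$, uniformly in $\nu$. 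Uniform continuity of $\f$ then gives $\tilde\dbE|\f(Y)-\f(\Xh^{h,\nu})|\le\rho(h)$, where $\rho$ depends only on the modulus of continuity of $\f$ and on $|\f|_0$, the latter controlling the small-probability event on which the time misalignment or the Brownian oscillation is atypically large.

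The main obstacle is exactly this embedding step: realising arbitrary increment laws as increments of a continuous martingale while respecting the hard diffusion bound $\|a^\dbP\|_\infty\le L$ and keeping the embedding clock aligned with the grid. It is precisely the third-moment condition in \eqref{eq:CondH} that makes the time fluctuations, and hence $\rho(h)$, vanish; the first two moment conditions only guarantee the correct drift and diffusion scales. Finally, the vector-valued case reduces to this one-dimensional construction through the isomorphism $\dbR^d\cong\dbR$ already exploited in Section~\ref{sec:schema}.
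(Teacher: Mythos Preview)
Your overall strategy matches the paper's: for each $\nu\in\cK$, couple the interpolated walk $\Xh^{h,\nu}$ with a continuous semimartingale $\overline X$ whose law lies in $\cP$, and control $\tilde\dbE[\f(\overline X)-\f(\Xh^{h,\nu})]$ via the modulus of continuity of $\f$ together with a small-probability tail. The paper does not build this coupling by hand; it invokes a strong invariance principle (Lemma~4.8 of Tan~\cite{TanCtrl}, see also Dolinsky~\cite{Dolinsky} and Sakhanenko~\cite{Sakhanenko}) to get $\tilde\dbP\big(\|\Xh^{h,\nu}-\overline X\|\ge h^{1/8}\big)\le Ch^{1/8}$ uniformly in $\nu$, and then takes $\rho(h)=\rho'(h^{1/8})+2|\f|_0\,h^{1/8}$ with $\rho'$ a modulus of continuity of $\f$.

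Your hands-on Skorokhod construction, however, has a concrete gap. You run the embedding Brownian motion at the \emph{constant} rate $L$, so the martingale part of $Y$ at time $ih$ is $\beta_{Lih}$; but the embedding only pins down $\beta_{\tau_i}=\sum_{j\le i}\xi_j$, and your own computation shows that $\tau_i$ concentrates around $\sum_{j\le i}v_j$, not around $Lih$. Condition~\eqref{eq:CondH} gives only $v_j\le Lh$, so the mismatch $Lih-\sum_{j\le i}v_j$ can be of order $ih$ (take e.g.\ $v_j=Lh/2$ for all $j$), and then $\|Y-\Xh^{h,\nu}\|$ does \emph{not} vanish with $h$. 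The natural repair is to time-change at the variable rate $v_i/h\le L$, which is exactly what your $\delta_j$-estimate is tailored to; but then one must verify that the law of $Y$ actually lies in $\cP$, i.e.\ that $Y$ is a semimartingale in the canonical filtration with drift and diffusion bounded by $L$. Since the rates $v_i$ depend on $\nu$, which in turn is fed back from the embedded Brownian increments, adaptedness is not automatic. This is precisely the place where the strong-approximation machinery cited by the paper does real work.

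A second issue is your reduction of the vector case to $d=1$ via ``the isomorphism $\dbR^d\cong\dbR$''. In Section~\ref{sec:schema} that device is used only to encode a single random vector as a scalar with a prescribed distribution. Here you would need a bijection that simultaneously preserves the sup-norm on $\O$ and the semimartingale decomposition with diffusion bound $L$; a measurable isomorphism between $\dbR^d$ and $\dbR$ does neither. The multi-dimensional coupling genuinely requires the invariance-principle results cited above rather than one-dimensional Skorokhod embedding.
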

	\begin{proof}
		Denote $\rho' : \R_+ \to \R_+$ as a continuity modulus of $\f$.
		Let $\nu \in \Kc$ and $X^{h,\nu}$ be defined by \eqref{eq:def_Xh} and $\widehat X^{h,\nu}$ its linear interpolation on $[0,T]$.
		Then under the condition \eqref{eq:CondH}, it follows from Lemma 4.8 of Tan \cite{TanCtrl} (see also Dolinsky \cite{Dolinsky}) that
		we can construct a process $\widehat X^{h,\nu}$ and another process $\overline X$ in the same probability space $(\tilde \Om, \tilde \Fc, \tilde \P)$, such that the image measure of $\overline X$ lies in $\Pc$,
		and for some constant $C$ independent of $h$,
		\beaa
			\tilde \P \Big( \big| \widehat X^{h,\nu} - \overline X \big| \ge h^{1/8} \Big)
			&\le&
			C h^{1/8}.
		\eeaa
		Let $\rho (h) := \rho'(h^{1/8}) + 2 \| \varphi \|_\infty h^{1/8}$, then it follows that
		\beaa
			\ul\cE[\f]
			\le
			\tilde \dbE[\f(\ol X)]
			\le
			\tilde\E \big[ \f(\widehat X^{h, \nu}) \big] + \rho(h),
		\eeaa
		which concludes the proof by the arbitrariness of $\nu \in \Kc$.
		\qed
	\end{proof}

\begin{lem}\label{lem: usc conv}
Let $\f:\O\rightarrow \dbR$  be lower semicontinuous and bounded from below, then it holds for all $(t,\o)\in \Th$ that
\beaa
\liminf_{h\rightarrow 0} \ul\cE_h[\f]
~\ge ~
\ul\cE[\f].
\eeaa
In particular, by defining $\ch : =\inf\{t\ge 0: |B_t|\ge x\}$ for some $x>0$, we have
\beaa
\limsup_{h\rightarrow 0} \ol\cE_h[1_{\{\ch \le \d\}}]
~\le ~
\ol\cE[1_{\{\ch\le \d\}}] \q\mbox{for any}~~\d>0.
\eeaa
\end{lem}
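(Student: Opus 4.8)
The plan is to establish the first inequality, $\liminf_{h\to 0}\ul\cE_h[\f]\ge \ul\cE[\f]$ for lower semicontinuous $\f$ bounded from below, and then derive the second claim as a consequence by applying the first to the indicator of a closed-type event. For the first inequality, I would argue by approximating the lower semicontinuous function $\f$ from below by bounded uniformly continuous functions. Concretely, for a lower semicontinuous $\f$ bounded from below, there exists an increasing sequence $\f_n\in\mbox{\rm BUC}(\O)$ with $\f_n\uparrow \f$ pointwise (the standard inf-convolution / Moreau--Yosida regularization $\f_n(\o):=\inf_{\o'}\{\f(\o')+n\,d(\o,\o')\}$ does this on the metric space $\O$). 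Since $\f_n\le \f$, monotonicity of $\ul\cE_h$ gives $\ul\cE_h[\f]\ge \ul\cE_h[\f_n]$, so $\liminf_{h\to 0}\ul\cE_h[\f]\ge \liminf_{h\to 0}\ul\cE_h[\f_n]$ for every fixed $n$.

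Now for each fixed $n$, $\f_n$ is bounded and uniformly continuous, so I can invoke a reverse estimate to Lemma \ref{lem: eqv convergence}. The content of Lemma \ref{lem: eqv convergence} was the inequality $\ul\cE[\f]\le \ul\cE_h[\f]+\rho(h)$; what I need here is essentially the same coupling in the other direction, namely $\ul\cE_h[\f_n]\ge \ul\cE[\f_n]-\rho_n(h)$. This follows from the very same construction of Tan \cite{TanCtrl}/Dolinsky \cite{Dolinsky} used in the proof of Lemma \ref{lem: eqv convergence}: given any $\nu\in\Kc$ one couples the interpolated discrete process $\widehat X^{h,\nu}$ with a process $\overline X$ whose law lies in $\Pc$, with the closeness estimate $\tilde\P(|\widehat X^{h,\nu}-\overline X|\ge h^{1/8})\le Ch^{1/8}$; uniform continuity of $\f_n$ then converts path-closeness into closeness of expectations up to a modulus $\rho_n(h)\to 0$, and since the coupled $\overline X$ has law in $\Pc$ one has $\tilde\dbE[\f_n(\overline X)]\ge \ul\cE[\f_n]$. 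Taking the infimum over $\nu$ yields $\ul\cE_h[\f_n]\ge \ul\cE[\f_n]-\rho_n(h)$. Letting $h\to 0$ gives $\liminf_{h\to 0}\ul\cE_h[\f]\ge \ul\cE[\f_n]$, and then letting $n\to\infty$ and applying the monotone convergence theorem for $\ul\cE$ (Lemma \ref{lem: Fatou}, equation \eqref{eq:MonoCvg}) gives $\ul\cE[\f_n]\uparrow\ul\cE[\f]$, completing the first inequality.

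For the second claim, I would apply the first inequality to the function $\f:=-1_{\{\ch\le\d\}}$, equivalently rewriting $\ol\cE_h[1_{\{\ch\le\d\}}]=-\ul\cE_h[-1_{\{\ch\le\d\}}]$ and $\ol\cE[1_{\{\ch\le\d\}}]=-\ul\cE[-1_{\{\ch\le\d\}}]$. The event $\{\ch\le\d\}=\{\sup_{s\le\d}|B_s|\ge x\}$ is closed in the sup-norm topology on paths, so its indicator $1_{\{\ch\le\d\}}$ is upper semicontinuous, hence $-1_{\{\ch\le\d\}}$ is lower semicontinuous and bounded. The first inequality then yields $\liminf_{h\to 0}\ul\cE_h[-1_{\{\ch\le\d\}}]\ge \ul\cE[-1_{\{\ch\le\d\}}]$, which upon negating both sides becomes exactly $\limsup_{h\to 0}\ol\cE_h[1_{\{\ch\le\d\}}]\le \ol\cE[1_{\{\ch\le\d\}}]$.

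The main obstacle I anticipate is the measurability/topology bookkeeping in the second part: one must verify that $\ch$ as a function of the path is such that $\{\ch\le\d\}$ is genuinely closed (so that its indicator is upper semicontinuous) in the topology induced by $d$, and handle the fact that $\ch$ is defined on the canonical space while the $\ul\cE_h$ lives on $(\tilde\O,\tilde\cF,\tilde\P)$ through the interpolation $\widehat X^{h,\nu}$ --- but this is the same pushforward convention already used throughout the paper. The genuinely substantive step is the reverse coupling estimate $\ul\cE_h[\f_n]\ge\ul\cE[\f_n]-\rho_n(h)$, which is not literally Lemma \ref{lem: eqv convergence} but is proved by the identical strong-invariance-principle construction applied symmetrically.
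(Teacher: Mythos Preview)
Your approach is essentially identical to the paper's: inf-convolution approximation $\f_n\uparrow\f$, then Lemma \ref{lem: eqv convergence} on each $\f_n$, then monotone convergence (Lemma \ref{lem: Fatou}) in $n$; for the second claim, closedness of $\{\ch\le\d\}=\{\max_{s\le\d}|B_s|\ge x\}$ makes the indicator upper semicontinuous, and one applies the symmetric statement.

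There is, however, a harmless confusion you should clear up. You write that Lemma \ref{lem: eqv convergence} gives $\ul\cE[\f]\le\ul\cE_h[\f]+\rho(h)$ and that you need the ``reverse'' inequality $\ul\cE_h[\f_n]\ge\ul\cE[\f_n]-\rho_n(h)$, to be obtained by re-running the coupling ``symmetrically''. But these two inequalities are \emph{algebraically identical}: subtract $\rho(h)$ from both sides of the first. So no new coupling argument is required; Lemma \ref{lem: eqv convergence} is exactly the estimate you need, and the paper invokes it directly to get $\limsup_{h\to 0}\ul\cE_h[\f^n]\ge\ul\cE[\f^n]$. Your ``main obstacle'' is therefore not an obstacle at all.
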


\begin{proof}
Define the approximation for the function $\f$:
\beaa
\f^n(\o):=\inf_{\o'\in\O}\big\{\f(\o')+n\|\o-\o'\|\big\}.
\eeaa
Clearly, for each $n\in \dbN$, function $\f^n$ is Lipschitz continuous, and $\f^n \uparrow \f$. By Lemma \ref{lem: eqv convergence}, we obtain that
\beaa
\liminf_{h\rightarrow 0}\ul\cE_h[\f]
    ~ \ge ~ \limsup_{h\rightarrow 0}\ul\cE_h[\f^n]
    ~ \ge ~   \ul\cE [\f^n], \q\mbox{for all}~~n\in\dbN.
\eeaa
Since $\f^n\uparrow\f$, by Fatou's lemma we have
\beaa
\liminf_{n\rightarrow\infty} \ul\cE[\f^n]
 ~\ge ~ \ul\cE[\f].
\eeaa
Therefore
\be\label{eq: res1}
\liminf_{h\rightarrow 0}\ul\cE_h[\f]  ~\ge~  \ul\cE[\f].
\ee

Then we easily get the symmetric result for upper semicontinuous function $\psi$, i.e.
\beaa
\limsup_{h\rightarrow 0} \ol\cE_h [\psi]~\le~ \ol\cE[\f].
\eeaa
To conclude, it remains to prove that the function $\o\longmapsto 1_{\{\ch(\o)\le \d\}}$ is upper semicontinuous. Note that
\beaa
\{\ch\le \d\} = \{\max_{t\in [0,\d]} |B_t|\ge x\}
\eeaa
Since the function $\f: \o\mapsto \max_{t\in [0,\d]} |B_t(\o)|$ is continuous, the set $\{\ch\le \d\}$ is closed. Consequently, the function $1_{\{\ch\le \d\}}$ is upper semicontinuous.
\qed
\end{proof}

\begin{lem}\label{lem: Hdelta}
For any $\d>0$ and $\e>0$, define $x(\d)= Ld\sqrt{\d}\big(\sqrt{\d} + \sqrt{-2\ln\frac{\e\d}{4d}}\big)$ and $\ch^{\d} = \inf\{t\ge 0: |B_t| \ge x(\d) \}$. Then,  
for $\d$ small enough we have
\be\label{defn: Hdelta}
\sup_{\dbP\in \cP}\dbP[\ch^\d\le \d] \le \e\d.
\ee
\end{lem}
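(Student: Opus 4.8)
The plan is to reduce this uniform hitting-time estimate to a single Gaussian tail bound for a time-changed Brownian motion, exploiting that the only way $\dbP \in \cP$ enters is through the uniform bounds $L$ on its drift and diffusion coefficients.

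First I would rewrite the event as a running-maximum event and peel off the drift. Since $\ch^\d = \inf\{t \ge 0 : |B_t| \ge x(\d)\}$, continuity of $B$ gives
\[
\{\ch^\d \le \d\} = \Big\{ \max_{0 \le t \le \d} |B_t| \ge x(\d) \Big\}.
\]
Fixing $\dbP \in \cP$ and using the decomposition $B = A^\dbP + M^\dbP$, the finite-variation part is controlled pathwise: from $|\mu^\dbP| \le L$ one gets $|A^\dbP_t| \le \int_0^t |\mu^\dbP_s|\,ds \le L\d$ for all $t \le \d$, so that
\[
\Big\{ \max_{t \le \d} |B_t| \ge x(\d) \Big\} \subseteq \Big\{ \max_{t \le \d} |M^\dbP_t| \ge x(\d) - L\d \Big\}.
\]
This leaves only the continuous martingale $M^\dbP$ to estimate.

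Next I would split into coordinates and time-change. Writing $M^\dbP = (M^{\dbP,1}, \dots, M^{\dbP,d})$ and using $|m| \le \sqrt d \max_i |m_i|$, a union bound reduces the problem to the one-dimensional martingales $M^{\dbP,i}$, each with $\langle M^{\dbP,i} \rangle_\d = \int_0^\d a^{\dbP,ii}_s\,ds \le L\d \le L^2 \d$ (recall $L \ge 1$). By the Dambis--Dubins--Schwarz theorem, $M^{\dbP,i}_t = W^i_{\langle M^{\dbP,i}\rangle_t}$ for some Brownian motion $W^i$, hence $\max_{t \le \d} |M^{\dbP,i}_t| \le \max_{s \le L^2\d} |W^i_s|$; the reflection principle together with the Gaussian bound $\dbP[Z \ge a] \le \tfrac12 e^{-a^2/2}$ then gives, uniformly in $\dbP \in \cP$,
\[
\dbP\Big[ \max_{t \le \d} |M^{\dbP,i}_t| \ge \rho \Big] \le 2\, e^{-\rho^2/(2 L^2 \d)}.
\]

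Finally I would combine these and calibrate the constants to the explicit $x(\d)$. Summing the $d$ coordinate estimates with $\rho = (x(\d) - L\d)/\sqrt d$ yields
\[
\dbP[\ch^\d \le \d] \le 2d\, \exp\!\Big( -\frac{(x(\d)-L\d)^2}{2 d L^2 \d} \Big).
\]
Since $x(\d) - L\d \ge Ld\sqrt\d\,\sqrt{-2\ln\frac{\e\d}{4d}}$ (the drift contribution $Ld\d$ dominates $L\d$ for $d \ge 1$), we get $\frac{(x(\d)-L\d)^2}{2 d L^2 \d} \ge d\ln\frac{4d}{\e\d}$, so the right-hand side is at most $2d\big(\frac{\e\d}{4d}\big)^d \le \frac{\e\d}{2} \le \e\d$ as soon as $\d$ is small enough that $\frac{\e\d}{4d} \le 1$ and the logarithm is well defined. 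As this bound does not depend on $\dbP$, taking the supremum over $\cP$ gives \eqref{defn: Hdelta}. The one delicate point is the uniformity over the nondominated family $\cP$; this is precisely what the time-change buys us, since after passing to $W^i$ every $\dbP$-dependence has been absorbed into the deterministic horizon $L^2\d$, and all that remains is the routine bookkeeping of matching the union-bound prefactor $2d$ against the Gaussian factor $\big(\frac{\e\d}{4d}\big)^d$.
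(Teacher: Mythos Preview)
Your proof is correct and follows essentially the same route as the paper: rewrite $\{\ch^\d \le \d\}$ as a running-maximum event, peel off the bounded drift, reduce to one-dimensional martingales by a coordinate union bound, apply the Dambis--Dubins--Schwarz time change to pass to a fixed Brownian motion on the deterministic horizon $L^2\d$, and finish with reflection plus a Gaussian tail bound. The only differences are cosmetic bookkeeping (you use the sharper $\sqrt d$ factor where the paper uses $d$, and you combine the reflection constant with the bound $\dbP[Z\ge a]\le \tfrac12 e^{-a^2/2}$ instead of the paper's $\dbP[Z\ge a]\le e^{-a^2/2}$ for $a>1$), which actually gives you a slightly better final constant.
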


\begin{proof}
Note that
\beaa
\sup_{\dbP\in \cP}\dbP[\ch^\d\le \d]  = \sup_{\dbP\in \cP}\dbP\big[\max_{t\in [0,\d]}|B_t|\ge x\big]
\le d \sup_{\dbP\in \cP}\dbP\big[\max_{t\in [0,\d]}|B^1_t|\ge \frac{x}{d}\big]
\eeaa
    By the definition of $\cP$ above \eqref{defn:PL}, for all $\P \in \cP$, the canonical process $B$ admits the canonical decomposition $B = A^{\P} + M^{\P}$,
    where $A^{\P} =(A^1,\cds,A^d)$ is a finite variation process
    and $M=(M^1,\cds,M^d)$ is a $\P$-martingale.
    Moreover, for each $i = 1, \cdots, d$,
\beaa
\dbP\big[\max_{t\in [0,\d]}|B^i_t|\ge \frac{x}{d}\big]
 =  \dbQ\big[\max_{t\in [0,\d]}|A^i_t+M^i_t|\ge \frac{x}{d}\big]
\le \dbQ\big[\max_{t\in [0,\d]}|M^i_t|\ge \frac{x}{d} - L\d\big].
\eeaa
Further, by the time-change for martingales (see e.g. Theorem 4.6 on page 174 of \cite{KS}), there is a scalar Brownian motion $W$ defined on a probability space $(\fO,\fF,\fP)$ such that
\beaa
\P\big[\max_{t\in [0,\d]}|M^i_t|\ge \frac{x}{d} - L\d\big]
 & = & \fP\big[\max_{t\in [0,\d]}|W_{<M^1>_t}|\ge \frac{x}{d} - L\d\big]\\
 &\le &  \fP\big[\max_{t\in [0,L^2\d]}|W_t|\ge \frac{x}{d} - L\d\big]\\
 & = & 4\fP\big[W_1 \ge \frac{x/d-L\d}{L\sqrt \d}\big]
\eeaa
Since $\eta: = \frac{x/d-L\d}{L\sqrt \d} = \sqrt{-2\ln\frac{\e\d}{4d}}>1$ when $\d$ is small enough,  we have
\beaa
4\fP\big[W_1 \ge \eta\big]
\le   4 e^{-\frac{\eta^2}{2}}  =  \frac{\e\d}{d}.
\eeaa
We then conclude that $\sup_{\dbP\in \cP}\dbP[\ch^\d\le \d] \le \e\d$.
\qed
\end{proof}

\subsection{ Proof of Proposition \ref{prop:eqv_defn}}

We only discuss the case of subsolution. The result about the supersolution follows similarly.

\ms
\no {\bf 1.}\q We first prove the only if part. Let $(t,\o)\in [0,T)\times \O$ and $(\a,\b,\g)\in \ul\cJ u(t,\o)$ with a localizing time $\ch_\d$. Clearly, there is a function $\f\in C^{1,2}_0(\dbR^+\times\dbR^d)$ such that $\f = \phi^{\a,\b,\g}$ on the set $[0,\d]\times\{x\in\dbR^d: |x|\le x(\d)\}$, where $x(\cd)$ is defined as in Lemma \ref{lem: Hdelta}. Thus,
\beaa
(\f - u)_0=\max_{\t\in \cT_{\bar\ch^{\d,x}}}\ol\cE [(\f-u)_\t],
\eeaa
where $\bar\ch^{\d,x} := \d\we\ch^{\d,x}$ with $\ch^{\d,x}$ be defined as in Lemma \ref{lem: Hdelta}. We have
\be\label{eq: from H to d}
(\f -u)_0 &\ge & \ol\cE [(\f-u)_\d ] - \ol\cE [(\f-u)_\d-(\f-u)_{\bar\ch^{\d,x}}].
\ee
For the second term on the right hand side of \eqref{eq: from H to d}, we have
\beaa
\ol\cE [(\f-u)_\d-(\f-u)_{\bar\ch^{\d,x}}]
		&\le &  \ol\cE \big[|(\f-u)_\d-(\f-u)_{\bar\ch^{\d,x}}|; \ch^{\d,x}\le \d \big]\\
		&\le &  C\sup_{\dbP\in\cP}\dbP[\ch^{\d,x}\le \d].
\eeaa
Take $\e>0$. By Lemma \ref{lem: Hdelta}, there is a constant $C(\e)>0$ such that for all $\d<C(\e)$ we have $\sup_{\dbP\in\cP}\dbP[\ch^{\d,x}\le \d] < \frac{\e\d}{2C}$. Then it follows from \eqref{eq: from H to d} that
\beaa
(\f-u)_0 > \ol\cE [(\f-u)_\d] -\frac{\e\d}{2}.
\eeaa
We next consider the optimal stopping problem:
\beaa
Y_t(\o) = \sup_{\t\in \cT_{\d-t}}\ol\cE [(\f-u)^{t,\o}_\t - \e\t].
\eeaa
According to Ekren, Touzi and Zhang \cite{ETZ-os}, $\t^*:= \inf\{t: Y_t = \f_t-u_t-\e t\}$ is an optimal stopping rule. Suppose that we always have $\bar\ch^{\d,x}\le \t^*\le \d$. Then we obtain that
\beaa
\ol\cE [(\f-u)_{\t^*} - \e\t^*] &\le & \ol\cE [(\f-u)_\d - \e\d] +\ol\cE [(\f-u)_{\t^*} - (\f-u)_\d -\e(\t^*-\d)] \\
		&\le & \ol\cE [(\f-u)_\d - \e\d] +\ol\cE [|(\f-u)_{\t^*} - (\f-u)_\d -\e(\t^*-\d)|; \ch^{\d,x}\le \d] \\
		&\le & \ol\cE [(\f-u)_\d - \e\d] + C\sup_{\dbP\in \cP}\dbP [\ch^{\d,x}\le \d]\\
		&\le & \ol\cE [(\f-u)_\d]- \frac{\e\d}{2} ~<~(\f-u)_0.
\eeaa
However, this is in contradiction with the optimality of $\t^*$. Therefore, there is $\o^*$ such that $t^*:=\t^*(\o^*) < \bar\ch^{\d,x}(\o^*)$ and
\beaa
(\f-u)_{t^*}(\o^*) &=& \max_{\t\in \cT_{\d -t^*}}\ol\cE [(\f-u)^{t^*,\o^*}_\t-\e\t].
\eeaa
So we have
\beaa
\big(-\pa_t \f+\e - G(\cd, u,\pa_x\f,\pa^2_{xx}\f)\big)(t^*,\o^*) &\le & 0.
\eeaa
By letting $\d\rightarrow 0$ and then $\e\rightarrow 0$, we obtain
    $$
        \big(-\pa_t \f - G(\cd, u,\pa_x\f,\pa^2_{xx}\f)\big)(0,0) ~~\le~~  0.
    $$
    Finally, since $\a = \pa_t \f_0, \b=\pa_x \f_0, \g=\pa^2_{xx}\f_0$, this provides that $-\a -G(0, u_0, \b,\g)\le 0$.

\ms
\no {\bf 2.}\q For the if part, one may apply the same argument as in Proposition 3.11 in \cite{RTZ-survey}. For completeness, we provide the full argument.  Let $(t,\o)\in[0,T)\times\Omega$ and $\varphi\in\underline\cA u(t,\omega)$ with a localizing time $\d\in\dbR^+$. Without loss of generality, we assume that $(t,\omega)=(0,0)$ and $(\varphi-u)_0=0$. Denote
 \bea
 \label{ab}
 \a:=\pa_t\f_0,\q
 \b:=\pa_x \f_0,\q
 \mbox{and}~~\g:=\pa^2_{xx} \f_0.
 \eea
For any $\e>0$, since $\f$ is smooth, by otherwise choosing a stopping time  $\ch_{\d'}<\d$ we may assume
 \beaa
 |\pa_t\f_t-\a|\le\eps,\q
|\pa_x\varphi_t-\b|\le\eps,\q
|\pa^2_{xx}\varphi_t-\g|\le2\eps,\q
 0\le t\le \ch_{\d'}.
 \eeaa
Denote $\a_\eps:=\a+[1+2L]\e$. Then, for all  $\tau\in\cT_{\ch_{\d'}}$,
 \beaa
 && \ol\cE \big[(u-\phi^{\a_\e,\b,\g})_{\tau}\big]-u_0 = \ol\cE \big[(u - u_0 - \phi^{\a_\e,\b,\g})_{\t}\big]\\
 &\le &	\ol\cE \big[(u -\f)_{\t}\big]+
 \overline\cE \!\big[(\varphi\!-\!\varphi_0\!-\!\phi^{\a_\e,\b,\g})_{\tau}\big]\\
 &\le &
 \ol\cE\Big[\int_0^{\t}(\pa_t\f_s -\! \a_\eps)ds
                                  +(\partial_x\f_s - \b) \cd dB_s
                                  +\frac12 (\pa^2_{xx}\f_s-\g):d\langle B\rangle_s
                          \Big].
 \eeaa
 where the last inequality is due to the It\^o's formula. Note that, for any $\|\mu\|_\infty,\|a\|_\infty\le L$, we have
 \begin{multline*}
  \dbE^{\dbQ_{\mu,\si}}\Big[\int_0^{\t}(\pa_t\f_s -\! \a_\eps)ds
                                  +(\partial_x\f_s - \b) \cd dB_s
                                  +\frac12(\pa^2_{xx}\f_s-\g):d\langle B\rangle_s
                          \Big]\\
 = \dbE^{\dbQ_{\mu,\si}}\Big[\int_0^{\t}
                                  \Big(\pa_t \f_s - \a
                                  + (\pa_x \varphi_s - \b) \cd  \mu_s+\frac12(\pa^2_{xx}\f_s-\g):a_s\Big)ds- [1+2L]\e\t
                          \Big]
                          \le  0.
 \end{multline*}
 By the arbitrariness of $\mu,\si$, we see that
 \beaa
\ol\cE\!\big[(u-\phi^{\a_\e,\b,\g})_{\tau}\big]-u_0 & \le & 0.
                          \eeaa
That is,  $(\a_\eps,\b)\in\underline\cJ u_0$. Since $u$ is a $\cP$-viscosity subsolution, it follows that
\beaa
-\a_\e-G(0,0,u_0,\b,\g) & \le & 0.
\eeaa
Let $\e\rightarrow 0$, then the desired result follows.
\qed

\subsection{Proof of Theorem \ref{thm:main}}

We first introduce two functions:
\be\label{defn: u bar}
\ul u (t,\o) = \liminf_{h\rightarrow 0} u^h(t,\o)
\q\mbox{and}\q
\ol u (t,\o) = \limsup_{h\rightarrow 0} u^h(t,\o).
\ee
Note that $\ul u,\ol u$ inherit the uniform modulus of continuity of $u^h$, so $\ul u,\ol u\in \mbox{\rm BUC}(\Th)$. It is also clear that $\ul u\le \ol u$ and $\ul u_T = \ol u_T$. Then it is enough to prove that $\ul u$ is a $\cP$-viscosity supersolution and $\ol u$ is a $\cP$-viscosity subsolution, so that by the comparison principle we may obtain $\ol u\le \ul u$,
to conclude the proof of Theorem \ref{thm:main}.

\begin{prop}\label{prop:supersolution}
The functions $\ul u$ and $\ol u$ defined in \eqref{defn: u bar} are $\cP$-viscosity supersolution and subsolution, respectively.
\end{prop}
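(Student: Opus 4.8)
The plan is to establish the subsolution property of $\ol u$; the supersolution property of $\ul u$ then follows by the symmetric argument, interchanging $(\ol\cE,\ol\cE_h,\ol\cA,\limsup)$ with $(\ul\cE,\ul\cE_h,\ul\cA,\liminf)$ and reversing the inequalities. By Proposition~\ref{prop:eqv_defn} it is enough to work with the constant-localized test functions: fix $(t,\o)\in[0,T)\times\O$ and $\f\in\ul\cA\,\ol u(t,\o)$ with localization time $\d$, and reduce by shifting to the case $(t,\o)=(0,0)$ with $(\ol u-\f)_0=0$, so that $\ol\cE[(\ol u-\f)_\t]\le0$ for all $\t\in\cT_\d$. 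The goal is $\cL^{0,0}\f_0\le0$, and I would argue by contradiction, assuming $\cL^{0,0}\f_0=2\eta>0$ for some $\eta>0$.

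First I would feed this strict sign into the consistency condition~\eqref{eq:newconsis}: it yields $h_0>0$, a radius, and a constant $\d_0>0$ such that
\[
\f(t',\om')+c-\T_h^{t',\om'}\!\big[\f(t'+h,\cdot)+c\big]\ \ge\ \eta\,h
\]
for all $h\le h_0$, all $|c|\le\d_0$, and all $(t',\om')$ for which the path has not yet left a neighborhood of the origin. The constant localization time $\d$ together with a hitting time $\ch^{\d,x}$ as in Lemma~\ref{lem: Hdelta} is what confines $(t',\om')$ to the region in which this strict discrete supersolution inequality is effective; there $\f$ plays the role of a strict supersolution of the scheme.

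The core is a backward induction along the scheme inside this region. Writing $u^h_{kh}=\T^{kh,\cdot}_h[u^h_{(k+1)h}]$, I would show that a pointwise bound $u^h_{(k+1)h}\le\f_{(k+1)h}+c_{k+1}$ on the region propagates: since pointwise domination makes the hypothesis $\inf_{0\le\a\le L}\ul\cE_h[e^{\a h}(\f+c_{k+1}-u^h)^{kh,\cdot}]\ge0$ of the monotonicity condition~\eqref{monocon2} hold trivially, \eqref{monocon2} gives $u^h_{kh}\le\T^{kh,\cdot}_h[\f_{(k+1)h}+c_{k+1}]+h\bar\d(h)$, and the strict consistency step then yields $u^h_{kh}\le\f_{kh}+c_{k}$ with $c_k=c_{k+1}-h(\eta-\bar\d(h))$. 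For $h$ small, $\bar\d(h)<\eta/2$, so each step improves the constant by at least $\eta h/2$; provided $\d$ is small enough that all running constants stay within $[-\d_0,\d_0]$, the induction runs up to time $\d$. The initialization at the stopping time uses the maximum property $\ol\cE[(\ol u-\f)_\t]\le0$, transferred to the discrete expectation $\ol\cE_h$ through Lemmas~\ref{lem: eqv convergence} and~\ref{lem: usc conv}; the event that the scheme path escapes the neighborhood before $\d$ is discarded using Lemma~\ref{lem: Hdelta} together with the bound $\limsup_{h\to0}\ol\cE_h[1_{\{\ch\le\d\}}]\le\ol\cE[1_{\{\ch\le\d\}}]$ of Lemma~\ref{lem: usc conv}. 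The factor $e^{\a h}$ and the infimum over $\a\in[0,L]$ in~\eqref{monocon2} are precisely what absorb the Lipschitz dependence of $G$ on $y=u(t,\o)$ when the terminal domination holds only in expectation rather than pointwise.

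Summing the gains over the $\sim\d/h$ steps would give $u^h(0,0)\le\f_0-\tfrac{\eta\d}{2}+o(1)$ as $h\to0$ along a subsequence realizing $\limsup_h u^h(0,0)=\ol u_0=\f_0$, hence $\f_0\le\f_0-\tfrac{\eta\d}{2}$, a contradiction; this forces $\cL^{0,0}\f_0\le0$ and shows $\ol u$ is a $\cP$-viscosity subsolution. I expect the main obstacle to be exactly the loss of local compactness that in the classical Barles--Souganidis proof supplies a maximizing point: here one cannot localize to a single touching point and must instead keep global control of the scheme path up to the exit time $\d\we\ch^{\d,x}$. Concretely, the two delicate ingredients are the transfer of the terminal maximum property from the continuous nonlinear expectation $\ol\cE$ to its discrete-time counterpart $\ol\cE_h$ (via Lemmas~\ref{lem: eqv convergence}--\ref{lem: usc conv}) and the quantitative control of the exit probability (Lemma~\ref{lem: Hdelta}) needed to neglect the part of the trajectory that leaves the region where consistency is valid; the interface between the discrete $\ul\cE_h$ appearing in~\eqref{monocon2} and the continuous $\ul\cE$ of the jets is the crux of the estimate.
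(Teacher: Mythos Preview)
Your outline has a genuine gap at the initialization of the backward induction. You propose to propagate a \emph{pointwise} bound $u^h_{(k+1)h}\le\f_{(k+1)h}+c_{k+1}$ on the region, because that is what makes the hypothesis of~\eqref{monocon2} at each $(kh,\o)$ trivially true. But the information available at time $\d$ is only in expectation: transferring $\ol\cE[(\ol u-\f)_\d]\le0$ to $\ol\cE_h$ via Lemmas~\ref{lem: eqv convergence}--\ref{lem: usc conv} yields at best $\ol\cE_h[(u^h-\f)_\d]\le o(1)$, not $u^h_\d\le\f_\d+c$ pointwise. The hypothesis of~\eqref{monocon2} at a given $(kh,\o)$ involves the \emph{shifted} expectation $\ul\cE_h[\cdot^{\,kh,\o}]$, and a global bound on $\ol\cE_h[(u^h-\f)_\d]$ says nothing about these path-by-path conditional expectations. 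Your closing remark that the factor $e^{\a h}$ ``absorbs'' the gap between expectation and pointwise domination is not correct: that factor is present only to handle the Lipschitz dependence of $G$ on $y$, and does nothing to upgrade an $\ol\cE_h$-bound to the $\ul\cE_h$-bound shifted to $(kh,\o)$ that~\eqref{monocon2} demands. If instead you try to iterate an expectation bound, the one-step estimate $u^h_{kh}(\o)-\f_{kh}(\o)\le c_{k+1}-\eta h+\sup_{\a}\ol\cE_h[e^{\a h}(u^h-\f-c_{k+1})^{kh,\o}_h]+h\bar\d(h)$ does not close in any cruder way; iterating it is exactly a discrete-time control/stopping problem.

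The paper supplies the missing touching point by precisely this route: it sets $Z^h:=u^h-\f^\e$ and considers the discrete mixed control/stopping problem
\[
Y^h_t(\o):=\inf_{\t\in\cT^h_{\d-t},\,\b\in\cB^h}\ul\cE_h\big[\b_\t\,(Z^h)^{t,\o}_\t\big],
\]
with $\b_t=e^{\sum\a_ih}$, and shows, using the exit-probability estimate of Lemma~\ref{lem: Hdelta} together with Lemma~\ref{lem: usc conv} and the comparison of $\ul\cE_h[Z^h_\d]$ with $Z^h_0$ coming from~\eqref{eq: AL}, that the optimal stopping time $\t^*$ must satisfy $\t^*(\o^*)<\bar\ch^{\d,x}(\o^*)\wedge(\d-h)$ for some $\o^*$. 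At the single point $(t^*,\o^*)$ so obtained, the dynamic programming for $Y^h$ yields exactly
\[
(u^h-\f^\e)(t^*,\o^*)\le\inf_{0\le\a\le L}\ul\cE_h\big[e^{\a h}(u^h-\f^\e)^{t^*,\o^*}_h\big],
\]
which is the hypothesis of~\eqref{monocon2} at $(t^*,\o^*)$; monotonicity and consistency are then applied once, and one sends $\d,h\to0$. The discrete optimal stopping problem is the substitute for the maximizing point that local compactness provides in Barles--Souganidis, and your backward induction does not furnish such a substitute.
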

\begin{proof}
We only prove the result for $\ul u$. The corresponding result for $\ol u$ can be proved similarly.

\ms
\no {\bf 1.}\q Without loss of generality, we only verify the viscosity supersolution property at the point $(0,0)$. Let function $\f\in \ol\cA \ul u(0,0)$, and by adding a constant to $\varphi$, we assume that $\ul u(0,0) > \varphi(0,0)$, so that
\be\label{eq: AL}
0 < \eta =: (\ul u-\f)_0 = \min_{\t\in \cT_\d}\ul\cE[(\ul u-\f)_\t],\q\mbox{for some}~\d>0.
\ee
Assume that $\ul u$ and $\f$ are both bounded by a constant $M\ge 0$. Take a subsequence still named as $u^h$ such that $\ul u_0 = \lim_{h\rightarrow 0} u^h_0$. Now fix a constant $\e>0$, and denote $\f^\e(t,x) = \f(t,x)-\e t$.  By Lemma \ref{lem: Hdelta}, there is a constant $C(\e)\in(0,\frac{1}{L} \we 1)$ such that for all $0<\d< C(\e)$, we have
\be\label{how small H}
\sup_{\dbP\in\cP}\dbP[\ch^{\d,x}\le \d]  ~\le ~ \frac{\e}{32(2M+\e)} \d.
\ee
Since $u^h$ is uniformly continuous uniformly in $h$, by considering $\d$ small enough we may assume that $u^h - \f^\e >0$ on $[0,\bar\ch^{\d,x}]$, where $\bar\ch^{\d,x}:=\d\we \ch^{\d,x}$. It follows from \eqref{eq: AL} that
\be\label{diff at delta}
(\ul u-\f^\e)_0 ~\le~ \ul\cE[(\ul u-\f)_\d] ~ =~ \ul\cE[(\ul u-\f^\e)_\d] - \e\d.
\ee

\no It follows from (iii) of Assumption \ref{ass:NumScheme} that $\{ u^h_\d-\f^\e_\d :h>0\}$ is uniformly bounded and uniformly continuous uniformly in $h$. By Lemma \ref{lem: Fatou} and \ref{lem: eqv convergence}, we obtain that
\be\label{ehe}
\liminf_{h\rightarrow 0}\ul\cE_h \big[  u^h_\d-\f^\e_\d \big]
&\ge &	\liminf_{h\rightarrow 0}\ul\cE\big[  u^h_\d-\f^\e_\d \big] + \liminf_{h\rightarrow 0}\big( \ul\cE_h \big[  u^h_\d-\f^\e_\d \big]-\ul\cE \big[  u^h_\d-\f^\e_\d \big]\big) \notag\\
&\ge &	\ul\cE\big[ \ul u_\d-\f^\e_\d\big] + \liminf_{h\rightarrow 0}\inf_{\ell>0} \big( \ul\cE_{h}\big[  u^\ell_\d-\f^\e_\d \big]-\ul\cE \big[  u^\ell_\d-\f^\e_\d \big] \big) \\
& \ge & \ul \cE \big[ \ul u_\d-\f^\e_\d \big] +\liminf_{h\rightarrow 0} \rho(h) 
~ = ~		\ul\cE \big[ \ul u_\d-\f^\e_\d\big]. \notag
\ee
It follows from \eqref{how small H} and \eqref{ehe} that for $h$ sufficiently small we have
\be\label{obj: contradiction}
(u^h-\f^\e)_0 & \le & \ul\cE_h\big[(u^h - \f^\e)_\d\big] - \frac{3\e\d}{4}.
\ee
Then by the optimal stopping argument in Step {\bf 2}, we may find $(t^*,\o^*)\in \Th$ such that
\be\label{obj: step4}
&\bar\ch^{\d,x}(\o^*)\we (\d-h) > t^* \in \Delta_h :=\{kh:k\in\dbN\} & \notag \\
  &\mbox{and}\q  (u^h - \f^\e)^{t^*,\o^*} _0 ~= ~ \min_{\t\in \cT^h_{\d-t^*}, \beta \in \Bc^h} \ul\cE_h[\beta_{\tau}(u^h - \f^\e)^{t^*,\o^*}_\t], &
\ee
where $\cT^h_{\d-t^*} : =\{\t\in\cT_{\d-t^*}: \t~\mbox{takes values in}~\Delta_h\}$ and
$\Bc^h$ is the collection of all processes $\beta$ defined by $\beta_t := e^{\sum_{i=0}^{[t/h]-1} \alpha_i h}$ for some $\cF_{ih}$-measurable functions $\alpha_i$ taking value in $[0,L]$.
In particular, \eqref{obj: step4} implies that
\be\label{uhfe}
(u^h - \f^\e)(t^*,\o^*) ~\le~ \inf_{0\le \a\le L}\ul\cE_h[e^{\a h}(u^h - \f^\e)^{t^*,\o^*}_h].
\ee
Since $t^*< \bar\ch^{\d,x}(\o^*)$, we have $(u^h - \f^\e)(t^*,\o^*)>0$, and thus 
$$\sup_{0\le \a\le L} e^{\a h}(u^h - \f^\e)(t^*,\o^*) ~=~  e^{Lh} (u^h - \f^\e)(t^*,\o^*).$$
 So it follows from \eqref{uhfe} that
\beaa
0 ~ \le ~  \inf_{0\le \a\le L}\ul\cE_h \Big[e^{\a h}\Big((u^h - \f^\e)^{t^*,\o^*}_h - e^{-Lh} (u^h - \f^\e)(t^*,\o^*)\Big)\Big]
\eeaa
By (ii) of Assumption \ref{ass:NumScheme}, we obtain
\beaa
0 ~\le ~ \T^{t^*,\o^*}_h[ u^h_{t^*+h}] - \T^{t^*,\o^*}_h[\f^\e_{t^*+h} + e^{-Lh}(u^h - \f^\e)(t^*,\o^*) ] +h \bar\d(h).
\eeaa
Since $u^h(t^*,\o^*) = \T^{t^*,\o^*}_h[u^h] $, it follows that
\be\label{final-diff}
\bar\d(h) & \ge & \frac{\T^{t^*,\o^*}_h[\f^\e_{t^*+h} + e^{-Lh}(u^h - \f^\e)(t^*,\o^*)] - u^h(t^*,\o^*)}{h}  \\
& = &   \frac{\T^{t^*,\o^*}_h[\f^\e_{t^*+h} + e^{-Lh}(u^h - \f^\e)(t^*,\o^*)] - \f^\e(t^*,\o^*)+ e^{-Lh}(\f^\e-u^h)(t^*,\o^*)}{h} \notag\\
& & \q\q\q\q\q\q\q\q\q -\frac{(e^{-Lh}-1)(\f^\e-u^h)(t^*,\o^*)}{h}  \notag
\ee
Since $t^*<\bar\ch^{\d,x}(\o^*)$, we have $(t^*,\o^*)\rightarrow 0$ as $\d\rightarrow 0$, and note that
\be\label{uto0}
\lim_{\d,h \rightarrow 0} |u^h(t^*,\o^*) - \ul u _0 | \le \lim_{\d,h \rightarrow 0}\Big( \rho_u\big( d((t^*,\o^*),(0,0))\big) + |u^h_0 - \ul u _0 |\Big) = 0,
\ee
where $\rho_u$ is the common modulus of continuity of the functions $u^h$.
 Further, by \eqref{final-diff} and \eqref{uto0}, it follows from (i) of Assumption \ref{ass:NumScheme}
 \beaa
 0 & \le &- \pa_t \f_0 + \e - G\big(0, \f_0 + \ul u_0 -\f_0,\pa_x \f_0, \pa^2_{xx} \f_0 \big) -L(\f^\e_0- \ul u_0) \notag\\
    & = & - \pa_t \f_0 + \e - G\big(0, \ul u_0 ,\pa_x \f_0, \pa^2_{xx} \f_0 \big) -L\eta.
\eeaa
Finally, we conclude the proof by letting $\e\rightarrow 0$ and then $\eta\rightarrow 0$.

\ms
\no {\bf 2.}\q We now complete the proof of the claim \eqref{obj: step4}. Consider the mixed control and optimal stopping problem in finite discrete-time:
\be \label{eq:YZ_OS}
    Y^h_t (\o) := \inf_{\t\in \cT^h_{\d-t}, \beta \in \Bc}\ul\cE_h [\beta_{\tau} (Z^h)^{t,\o}_\t],~~\mbox{where}~~Z^h_t := (u^h -\f^\e)_t, ~t\in \D_h.
\ee
By standard argument, we have
\beaa
Y^h_0 = \inf_{\beta \in \Bc}\ul\cE_{h} [\beta_{\tau^*}Z^h_{\t^*}],\q\mbox{where}~~\t^* := \inf\{t\in \Delta_h : Y^h_t=Z^h_t \}.
\eeaa
It remains to prove that there exists $\o^*$ such that 
\be\label{objcontradiction}
t^*:=\t^*(\o^*)<\bar\ch^{\d,x}(\o^*)\we (\d-h).
\ee 
Recall that $0 < Z^h \le 2M+\e$ on $[0,\ch^{\d,x}]$ for $h$ small enough. Then since $\t^*\le \d$, we have
\be\label{Zestime}
\ul\cE_h[Z^h_{\t^*}]  & \le &  \ul\cE_h[Z^h_{\t^*};\ch^{\d,x}> \d] +  \ol\cE_h[Z^h_{\t^*};\ch^{\d,x}\le \d] \notag\\
& = &	\inf_{\beta \in \Bc}\ul\cE_h[\beta_{\t^*} Z^h_{\t^*};\ch^{\d,x}> \d] +  \ol\cE_h[Z^h_{\t^*};\ch^{\d,x}\le \d] \notag\\
&\le &   \inf_{\beta \in \Bc}\ul\cE_h[\beta_{\t^*} Z^h_{\t^*}] + \sup_{\b\in\cB}\ol\cE[\beta_{\t^*} |Z^h_{\t^*}|;\ch^{\d,x}\le \d]+  \ol\cE_h[|Z^h_{\t^*} |;\ch^{\d,x}\le \d] \notag\\
&\le &   Y^h_0+ (1+e^{L\d}) \ol\cE_h[|Z^h_{\t^*} |;\ch^{\d,x}\le \d] \notag\\
&\le &   Y^h_0 + (1+e^{L\d})(2M+\e) \ol\cE_h[1_{\{\ch^{\d,x}\le \d\}}].
\ee
On the other hand, we obtain from Lemma \ref{lem: usc conv} that for $h$ small enough it holds
\be\label{eq: stopping time limit}
\ol\cE_h [1_{\{\ch^{\d,x}\le \d\}}] < \ol\cE [1_{\{\ch^{\d,x}\le \d\}}]+\frac{\e\d}{8(4M+2\e)},
\ee
It follows from \eqref{Zestime} and \eqref{eq: stopping time limit} that
\beaa
\ul\cE_h[Z^h_{\t^*}]  ~\le ~ Y^h_0 + (1+e^{L\d})(2M+\e) \ol\cE[1_{\{\ch^{\d,x}\le \d\}}] + \frac{\e\d}{8}.
\eeaa
Further, it follows from \eqref{how small H} that $(1+e^{L\d})(2M+\e)\ol\cE[1_{\{\ch^{\d,x}\le \d\}}] \le \frac{\e\d}{8}$.
Therefore,
\be\label{estime:YZ}
Y^h_0 ~\ge~ \ul\cE_{h} [Z^h_{\t^*}] - \frac{\e\d}{4}.
\ee
Suppose that contrary to \eqref{objcontradiction} we have
\be\label{contradiction os}
\bar\ch^{\d,x}(\o) \we (\d -h) \le \t^*(\o) \le \d, \q\mbox{for all}~~\o.
\ee
Note that
\be\label{eq: Y_0}
\ul\cE_h [Z^h_{\t^*}]  ~\ge  ~ \ul\cE_h[Z^h_\d] + \ul\cE_h [Z^h_{\t^*} - Z^h_{\t^*\vee (\d-h)}] + \ul\cE_h[Z^h_{\t^*\vee (\d-h)} - Z^h_\d].
\ee
It follows from \eqref{how small H} and \eqref{eq: stopping time limit} that
\be\label{i1}
I_1 &:= & \ul\cE_h[Z^h_{\t^*}  - Z^h_{\t^*\vee (\d-h)}]  ~=~	\ul\cE_h[Z^h_{\t^*}  - Z^h_{\d-h}; \t^*<\d-h]\\
& \ge &	-(4M+2\e)\ol\cE_h[1_{\{\ch^{\d,x}\le \d\}}] ~>~ - (4M+2\e)\ol\cE[1_{\{\ch^{\d,x}\le \d\}}]-\frac{\e\d}{8} ~\ge~   -\frac{\e\d}{4}.\notag
\ee
On the other hand, we have
\be\label{eq:I2}
I_2 &:= & \ul\cE_h\big[Z^h_{\t^*\vee (\d-h)} - Z^h_\d\big] \notag\\
& \ge &  - \ol\cE_h\big[(\rho_u+\rho_\f)(h+2\| B_{(\d-h)\we\cd}-B_{\d\we\cd}\|)\big] -\e h,
\ee
where $\rho_u,\rho_\f$ are moduli of continuity of function $u^h$, $\f$, and are chosen to be bounded and continuous. Again by Lemma \ref{lem: usc conv}, we have for $h$ sufficiently small that
\be\label{estime:eh0}
\ol\cE_h \big[  (\rho_u+\rho_\f)(h+2\| B_{(\d-h)\we\cd}-B_{\d\we\cd}\|) \big]
&<& \ol\cE\big[ (\rho_u+\rho_\f)(h+2\| B_{(\d-h)\we\cd}-B_{\d\we\cd}\|) \big] + \frac{\e\d}{8} \notag\\
&=&  \ol\cE\big[ (\rho_u+\rho_\f)(h+2\| B_{h\we\cd} \|\big] + \frac{\e\d}{8},
\ee
It follows from \eqref{eq:I2}, \eqref{estime:eh0} and $\lim_{h\rightarrow 0} \ol\cE\big[ (\rho_u+\rho_\f)(h+2\| B_{h\we\cd} \|\big] =0$ that
\be\label{i2}
I_2 &>& -\frac{\e\d}{4}\q\mbox{for $h$ sufficiently small}.
\ee
Plugging the estimates of \eqref{i1} and \eqref{i2} into \eqref{eq: Y_0}, we obtain
\be\label{ztau-zd}
\ul\cE_h [Z^h_{\t^*}]  ~>  ~   \ul\cE_h[Z^h_\d]  -\frac{\e\d}{2}. 
\ee
Finally, by \eqref{obj: contradiction}, \eqref{estime:YZ} and \eqref{ztau-zd} we have
\beaa
Y^h_0  
~>~ 	 \ul\cE_h[Z^h_\d] - \frac{3\e\d}{4}
~\ge~ 	Z^h_0,
\eeaa
which contradicts the definition of $Y$ in \eqref{eq:YZ_OS}. Therefore, \eqref{contradiction os} is wrong. The proof is complete.
\qed
\end{proof}
	
\section{Conclusion}

	We provide a convergence theorem of monotone numerical schemes for a class of parabolic PPDE,
	which generalizes the classical convergence theorem of Barles and Souganidis \cite{BarlesSouganidis}.
	In contrast to the formulation of \cite{ZhangZhuo}, our conditions are satisfied by all classical monotone numerical scheme for PDEs, to the best of our knowledge.
	Moreover, our results permit to deduce some numerical schemes for path-dependent stochastic differential game problems
	and the second order BSDEs whose generator depends on $z$ (see \eqref{eq:PPDE_2BSDE}, \eqref{eq:PPDE_DG}),
	which are new in literatures.

	Other numerical schemes, such as the branching process scheme of Henry-Labord\`ere, Tan and Touzi \cite{HTT}, are possible for some PPDE, but it is not analyzed by the monotone scheme arguments.

\end{document}